\def\R{{\mathbb R}}
\def\eps{\varepsilon}
\def\wh{\widehat}
\newcommand\bfx{{\mathbf x}}
\newcommand\bfI{{\mathbf I}}
\newcommand\bfA{{\mathbf A}}
\newcommand\bfB{{\mathbf B}}
\newcommand\bfC{{\mathbf C}}
\newcommand\bfD{{\mathbf D}}
\newcommand\bfF{{\mathbf F}}
\newcommand\bfH{{\mathbf H}}
\newcommand\bfK{{\mathbf K}}
\newcommand\bfL{{\mathbf L}}
\newcommand\bfM{{\mathbf M}}
\newcommand\bfP{{\mathbf P}}
\newcommand\bfQ{{\mathbf Q}}
\newcommand\bfR{{\mathbf R}}
\newcommand\bfS{{\mathbf S}}
\newcommand\bfU{{\mathbf U}}
\newcommand\bfV{{\mathbf V}}
\newcommand\bfX{{\mathbf X}}
\newcommand\bfY{{\mathbf Y}}
\newcommand\bfZ{{\mathbf Z}}
\newcommand\bfSigma{{\mathbf \Sigma}}
\newcommand\bfOmega{{\mathbf \Omega}}
\def\eps{\varepsilon}
\def\phi{\varphi}
\crefname{hypothesis}{Hypothesis}{Hypotheses}
\title{Second-order robust parallel integrators for dynamical low-rank approximation}
\author{Jonas Kusch\thanks{Norwegian University of Life Sciences, Scientific Computing, \r{A}s, Norway
  (\email{jonas.kusch@nmbu.no}).}}
\DeclareMathOperator{\diag}{diag}
\begin{document}

\maketitle

\begin{abstract}
Due to its reduced memory and computational demands, dynamical low-rank approximation (DLRA) has sparked significant interest in multiple research communities. A central challenge in DLRA is the development of time integrators that are robust to the curvature of the manifold of low-rank matrices. Recently, a parallel robust time integrator that permits dynamic rank adaptation and enables a fully parallel update of all low-rank factors was introduced. Despite its favorable computational efficiency, the construction as a first-order approximation to the augmented basis-update \& Galerkin integrator restricts the parallel integrator's accuracy to order one. In this work, an extension to higher order is proposed by a careful basis augmentation before solving the matrix differential equations of the factorized solution. A robust error bound with an improved dependence on normal components of the vector field together with a norm preservation property up to small terms is derived. These analytic results are complemented and demonstrated through a series of numerical experiments.
\end{abstract}

\begin{keywords}
dynamical low-rank approximation, matrix differential equations
\end{keywords}

\begin{MSCcodes}
65L05, 65L20, 65L70
\end{MSCcodes}

\section{Introduction}\label{sec:intro}
Though dynamical low-rank approximation (DLRA) \cite{KochLubich07} offers a significant reduction of computational costs and memory consumption when solving tensor differential equations~\cite{haegeman2016unifying,ceruti2023rank,sulz2024numerical}, the use of DLRA to solve matrix differential equations has sparked immense interest in several communities. Research fields in which DLRA for matrix differential equations has a considerable impact include plasma physics \cite{EiL18, Einkemmer2020,Einkemmer2023,cassini2022efficient, EiJ21,Coughlin2022,Einkemmer2022,Coughlin2023,Einkemmer2023b,Uschmajew2023}, radiation transport \cite{PeMF20,ding2021dynamical,peng2021high,kusch2022low,peng2023sweep,yin2023semi,einkemmer2024asymptotic,baumann2023energy,kusch2023robust}, chemical kinetics \cite{Jahnke2008,Prugger2023,Einkemmer2023a}, wave propagation \cite{Hochbruck2023,zhao2023low}, uncertainty quantification \cite{SaL09,babaee2017robust,FeL18,MuN18,MuNV20,patil2020real,kusch2021DLRUQ,kazashi2021existence,DoPNH23,ali2024dynamicallyAppl}, and machine learning \cite{schotthofer2022low,zangrando2023rank,savostianova2023robust,schmidt2023rank}. These application fields commonly require memory-intensive and computational costly numerical simulations due to the solution's prohibitively large phase space. The main idea of DLRA to reduce computational costs and memory consumption is to approximate and evolve the solution by a low-rank matrix factorization. A central challenge in computing the time evolution of this low-rank factorization is to construct time integrators that do not exhibit dependence on the curvature of the manifold of low-rank matrices. This curvature becomes prohibitively large when the smallest singular value of the low-rank factorization is close to zero. In this case, standard time integration methods require prohibitively small time step sizes to ensure stability. \\

Low-rank integrators that do not depend on the manifold's curvature have been designed to move only along flat subspaces on the manifold, thereby inheriting the stability region of the original, full-rank problem. Existing robust integrators are the projector--splitting (PS) integrator~\cite{LubichOseledets,KieriLubichWalach}, basis-update \& Galerkin (BUG) integrators \cite{CeL22,CeKL22,CeKL23,ceruti2024robust}, and projection methods \cite{KiV19,charous2023dynamically,SeCK23,carrel2023projected}. While the projector--splitting integrator has proven to yield accurate solutions for various problems, the fact that it includes a substep that evolves the solution backward in time can lead to instabilities not only for parabolic problems but also for numerical stabilization terms in hyperbolic problems \cite{kusch2021stability} and problems exhibiting strong scattering \cite{einkemmer2024asymptotic}. While the projector--splitting integrator can be combined with high-order splitting methods, analytic error bounds only exist for order one \cite{KieriLubichWalach}. 
The fixed-rank BUG integrator \cite{CeL22}, also known as the ``unconventional integrator'' as it does not rely on a splitting scheme, evolves the evolution equations of the basis in parallel, followed by a sequential update of the coefficient matrix. Besides its parallelism in updating the basis matrices, the main advantage of this integrator is that it evolves the solution only forward in time. This integrator's main drawback is its lack of structure--preservation and sometimes poor accuracy, see, e.g., \cite{Einkemmer2023}. A conservative and rank-adaptive extension has been proposed in \cite{CeKL22}, which, compared to \cite{CeL22}, can be shown to preserve local conservation laws for kinetic problems \cite{Einkemmer2022, Einkemmer2023b}. Moreover, it preserves, up to a user-determined tolerance parameter, the norm when the full-rank differential equation does, it preserves the energy for Hamiltonian systems and Schrödinger equations, and it preserves the monotonic decrease of the functional in gradient flows. This integrator, also known as the \emph{augmented BUG} or \emph{rank-adaptive BUG} integrator, comes at the cost of requiring a rank $2r$ coefficient update where $r$ is the rank of the solution at the previous time step. These increased costs have been mitigated by the parallel integrator \cite{CeKL23}, which does not require a potentially expensive coefficient update at rank $2r$. A primary advantage of the parallel integrator is its ability to solve the differential equations of all low-rank factors in parallel. These benefits are achieved by approximating serial terms in the coefficient update as zero, limiting the parallel integrator to order one. Further disadvantages are the lack of the desirable structure preservation properties of the augmented BUG integrator. 

While BUG integrators have shown accurate solution approximations in various fields, the previously mentioned integrators are limited to order one. However, it must be noted that the augmented BUG integrator exhibits a second-order error bound in various numerical experiments \cite[Section~5.2]{ceruti2024robust}, a behavior that is not well understood on the analytic level. Recently, a midpoint BUG integrator has been presented in \cite{ceruti2024robust} that allows for a robust error bound of order two while inheriting the structure--preserving properties of \cite{CeKL22}. The central idea of this integrator is to use the augmentation step proposed in \cite{CeKL22} to enlarge the basis for the Galerkin step with second-order information. This comes at the cost of solving a Galerkin step of rank $3r$ (or $4r$ to achieve an improved error bound), leading to increased computational costs. Moreover, the midpoint BUG integrator requires three sequential steps: two parallel basis updates followed by two sequential Galerkin steps.

This work proposes extensions to the parallel integrator, which achieve second-order convergence in time, independent of the manifold's curvature, and with differential equations for low-rank factors of only rank $2r$. The main novelties are:
\begin{itemize}
    \item \emph{Robust extensions of the parallel integrator to second order}: The resulting integrators again solve all differential equations in parallel; however, they require an augmentation that yields rank $2r$ updates of the low-rank factors. By construction, these integrators are rank-adaptive and evolve the solution forward in time.
    \item \emph{A second order robust error bound}: A robust error bound of order two is shown to hold independent of the low-rank manifold's curvature under standard assumptions \cite{KieriLubichWalach} with improved dependence on normal components, similar to~\cite{ceruti2024robust}.
    \item \emph{Near norm preservation}: Norm preservation is shown to hold up to the truncation tolerance, normal components, and $O(h^4)$ terms, where $h$ is the time step size, and constants are independent of the low-rank manifold's curvature.
\end{itemize}
These results are complemented with numerical experiments that demonstrate the novel integrators' robust second-order error bound and its reduced computational costs. However, it must be noted that the midpoint BUG integrator, though requiring additional operations, yields an improved error for Schr\"odinger equations.

The paper is structured as follows: After the introduction in Section~\ref{sec:intro}, a summary of dynamical low-rank approximation with a particular focus on the parallel BUG integrator is provided in Section~\ref{sec:background}. Section~\ref{sec:2ndorderpar} introduces the two variants of second--order parallel integrators together with a second--order robust error bound provided in Section~\ref{sec:robusterror}. In Section~\ref{sec:preservation}, the norm preservation property of the integrators is studied. Section~\ref{sec:numExp} presents a series of numerical experiments showcasing the parallel integrators' properties.

\section{Background}\label{sec:background}
\subsection{Dynamical low-rank approximation}
In the following, we review fundamental concepts of dynamical low-rank approximation and robust time integration for DLRA.
We consider a (prohibitively large) matrix ordinary differential equation (ODE) of the form
\begin{align}\label{eq:exact}
	\dot \bfA(t) = \bfF(t, \bfA(t)), \quad \bfA(t=0) = \bfA_0\,,
\end{align}
where $\bfA(t)\in\mathbb{R}^{m\times n}$. To efficiently solve problems of this type, dynamical low-rank approximation represents the solution as a low-rank factorization $\bfA(t) \approx \bfX(t):= \bfU(t)\bfS(t)\bfV(t)^{\top} \in \mathcal{M} \subset \mathbb{R}^{m\times n}$ where $\bfU(t)\in\mathbb{R}^{m\times r}$, $\bfV(t)\in\mathbb{R}^{n\times r}$ are basis matrices with orthonormal columns and $\bfS(t)\in\mathbb{R}^{r\times r}$ is the coefficient matrix. To ensure an improved memory footprint, the rank $r$ is chosen such that $r\ll m,n$. Following \cite{KochLubich07}, we denote the manifold of rank $r$ matrices by $\mathcal{M}$ and its tangent space at position $\bfZ$ as $T_{\bfZ}\mathcal{M}$.
Then $\bfX(t)$ is evolved in time according to
\begin{align*}
\dot \bfX(t)\in T_{ \bfX(t)}\mathcal{M} \qquad \text{such that} \qquad \left\Vert \dot\bfX(t)-\mathbf{F}(t, \bfX(t)) \right\Vert = \text{min}\,,
\end{align*}
where $\Vert \cdot \Vert = \Vert \cdot \Vert_F$ denotes the Frobenius norm. That is, the time derivative of $\bfX(t)$ is chosen to lie in the tangent space while minimizing the defect. This formulation can be written as \cite[Lemma~4.1]{KochLubich07} 
\begin{align}\label{eq:M-exact}
\dot \bfX(t) = \bfP(\bfX(t))\bfF(t, \bfX(t)) \,,
\end{align}
where for $\bfX = \bfU\bfS\bfV^{\top}$, the orthogonal projection of $\bfZ$ onto the tangent space at $\bfX$ is given by
\begin{align*}
\bfP(\bfX)\bfZ := \bfU\bfU^{\top}\bfZ(\bfI - \bfV\bfV^{\top}) + \bfZ \bfV\bfV^{\top}\,.
\end{align*}
In the following, we often write 
\begin{align*}
\bfF(t,\bfX) = \bfP(\bfX)\bfF(t,\bfX) + (\bfI - \bfP(\bfX))\bfF(t,\bfX) =: \bfM(t,\bfX) + \bfR(t,\bfX)\,,
\end{align*}
where $\bfM(t,\bfX) := \bfP(\bfX)\bfF(t,\bfX)\in T_{ \bfX}\mathcal{M}$ is the tangent component of the full-rank dynamics and $\bfR(t,\bfX) := (\bfI - \bfP(\bfX))\bfF(t,\bfX)$ is the distance between the projected dynamics of \eqref{eq:M-exact} to the full-rank dynamics of \eqref{eq:exact}. Then, the projected problem \eqref{eq:M-exact} can be rewritten as evolution equations for the individual low-rank factors of $\bfX(t)$ as \cite[Proposition~2.1]{KochLubich07}
\begin{subequations}\label{eq:factorizedDyn}
\begin{align}
\dot \bfU(t) =\,& \bfF(t,\bfX(t))\bfV(t)\bfS(t)^{-1}\,, \\
\dot \bfS(t) =\,& \bfU(t)^{\top}\bfF(t,\bfX(t))\bfV(t)\,, \\
\dot \bfV(t) =\,& \bfF(t,\bfX(t))^{\top}\bfU(t)\bfS(t)^{-\top}\,.
\end{align}
\end{subequations}
Solving \eqref{eq:factorizedDyn} or equivalently \eqref{eq:M-exact} with standard time integration schemes requires prohibitively small time step sizes. This lies in the fact that the projector $\bfP$ is non-smooth \cite[Lemma~4.2]{KochLubich07}, that is, if the smallest non-zero singular value satisfies $\sigma_r(\bfX) \geq \rho > 0$ and $\bfZ\in\mathcal{M}$ with $\Vert \bfZ - \bfX \Vert \leq \frac18 \rho$, then with the spectral norm $\Vert \cdot\Vert_2$,
\begin{align*}
	\Vert (\bfP(\bfZ) - \bfP(\bfX))\bfB \Vert \leq 8\rho^{-1} \Vert \bfZ - \bfX \Vert \cdot \Vert \bfB\Vert_2\,.
\end{align*}
Note that $\rho^{-1}$, i.e., the inverse of the smallest singular value of the solution, can become arbitrarily big, especially if the rank is chosen sufficiently large. From a geometric standpoint, $\rho^{-1}$ can be understood as the curvature of the low-rank manifold $\mathcal{M}$.
A time integrator that does not depend on the potentially large $\rho^{-1}$ term is the projector--splitting integrator \cite{LubichOseledets}, which splits the dynamics into three smooth substeps, thereby conserving the time step restriction of the full problem. A rigorous error analysis showing first order accuracy of the projector--splitting integrator has been presented in \cite{KieriLubichWalach}. The BUG integrators \cite{CeL22,CeKL22,CeKL23} inherit this first-order robust error bound. The midpoint BUG integrator \cite{ceruti2024robust} possesses a second-order robust error bound; however, it requires three sequential updates and a maximal rank of $3r$ (or even $4r$ to ensure an improved dependence on normal components). This work proposes robust extensions of the parallel integrator to order two. The resulting integrators solve all differential equations in parallel with a maximal rank of $2r$. For this, let us first review the parallel integrator in the next section.

\subsection{The parallel basis-update \& Galerkin integrator}

The parallel BUG integrator of \cite{CeKL23} is constructed as a first-order approximation to the augmented BUG integrator of \cite{CeKL22}. To update the rank $r$ solution $\bfY_0 = \bfU_0\bfS_0\bfV_0^{\top}$ from time $t_0$ to time $t_1$, the augmented BUG integrator first updates the basis along flat subspaces of $\mathcal{M}$ according to the $m\times r$ and $n \times r$ matrix ordinary differential equations
\begin{alignat*}{2}
\dot{\bfK}(t) =\,& \bfF(t,\bfK(t)\bfV_0^{\top})\bfV_0\, , \quad &&\bfK(t_0) = \bfU_0 \bfS_0 \, , \\
\dot{\bfL}(t) =\,& \bfF(t,\bfU_0 \bfL(t)^{\top})^{\top}\bfU_0\, , \quad &&\bfL(t_0) = \bfV_0 \bfS_0^{\top}\, ,
\end{alignat*}
in parallel. The orthonormal basis $\wh \bfU\in\mathbf{R}^{m\times 2r}$ is then constructed as the span of $\bfU_0$ and $\bfK(t_1)$, while the orthonormal basis of the co-range $\wh \bfV\in\mathbf{R}^{n\times 2r}$ is constructed as the span of $\bfV_0$ and $\bfL(t_1)$. In the following, this augmented basis is split into $\wh \bfU = [\bfU_0, \widetilde \bfU_1]$ and $\wh \bfV = [\bfV_0, \widetilde \bfV_1]$, that is, the basis consists of the old basis at time $t_0$ and the orthonormal new information gathered when solving the differential equations for $\bfK$ and $\bfL$, denoted by a tilde. 

With $\widehat \bfF(t)=\bfF(t,\wh\bfU \wh\bfS(t) \wh \bfV^\top)$, the augmented BUG integrator then performs a sequential Galerkin step to update the coefficient matrix according to
\begin{align}\label{eq:augmentedSMatrix}
    \dot{\widehat \bfS}(t) = \widehat \bfU^\top \bfF(t,\widehat \bfU \widehat \bfS(t) \widehat \bfV^\top) \widehat \bfV =
\begin{pmatrix} 
\bfU_0^\top \widehat \bfF(t) \bfV_0 & \bfU_0^\top \widehat \bfF(t) \widetilde \bfV_1
\\
\widetilde \bfU_1^\top \widehat \bfF(t) \bfV_0 & \widetilde \bfU_1^\top \widehat \bfF(t) \widetilde \bfV_1
\end{pmatrix}
\end{align}
leading to the rank $2r$ solution $\wh\bfY_{\mathrm{BUG}} = \wh\bfU\widehat \bfS(t_1)\wh\bfV^{\top}$. The augmented BUG integrator truncates the solution to a new rank $r_1<2r$. This yields a robust error bound of order one \cite[Theorem~2]{CeKL22}, that is, all constants in the error bound are independent of $\rho^{-1}$.

The parallel integrator of \cite{CeKL23} updates the augmented bases $\wh \bfU$ and $\wh \bfV$ similarly to the augmented BUG integrator. Instead of removing information gathered when updating the bases, it uses this information to construct a first-order approximation to the coefficient matrix of the augmented BUG integrator. 

In the following algorithm, we denote an orthonormalization algorithm (such as Gram-Schmidt) as $\text{orth}(\bfZ)$. Then, starting at a rank $r$ factorization $\bfY_0 = \bfU_0\bfS_0\bfV_0^{\top}$, the parallel integrator updates the solution factors to a new time $t_1 = t_0 + h$ according to the following algorithm:
\begin{enumerate}
\item Determine the augmented basis matrices $\widehat{\mathbf{U}}_1\in \mathbb{R}^{m\times 2r}$ and $\widehat{\mathbf{V}}_1\in \mathbb{R}^{n\times 2r}$ as well as the coefficient matrix $\bar{\mathbf{S}}(t_1) \in\mathbb{R}^{r \times r}$ (in parallel):
\\[2mm]
\textbf{K-step}: Integrate from $t=t_0$ to $t_1$ the $m \times r$ matrix differential equation
\begin{align*}
\dot{\bfK}(t) =\,& \bfF(t,\bfK(t)\bfV_0^{\top})\bfV_0\, , \quad \bfK(t_0) = \bfU_0 \bfS_0 \, .
\end{align*}
Construct $\widehat \bfU_1= [\bfU_0, \widetilde \bfU_1] = \text{orth}([\bfU_0, \bfK(t_1)])$. 
\\[2mm]
\textbf{L-step}: Integrate from $t=t_0$ to $t_1$ the $n \times r$ matrix differential equation
\begin{align*}
\dot{\bfL}(t) =\,& \bfF(t,\bfU_0 \bfL(t)^{\top})^{\top}\bfU_0\, , \quad \bfL(t_0) = \bfV_0 \bfS_0^{\top}\, .
\end{align*}
Construct $\widehat \bfV_1= [\bfV_0, \widetilde \bfV_1] = \text{orth}([\bfV_0, \bfL(t_1)])$. 
\\[2mm]
\textbf{S-step}: Solve the $r \times r$ matrix differential equation
\begin{align*}
\dot{\bar \bfS}(t) = \bfU_0^{\top}\bfF(t,\bfU_0\bar \bfS(t)\bfV_0^{\top})\bfV_0\, , \quad \bar \bfS(t_0) = \bfS_0\, .
\end{align*}
\item \textbf{Augment}: Set up the augmented coefficient matrix $\widehat{\bfS}_1 \in\mathbb{R}^{2r\times 2r}$ as
\begin{align}\label{eq:Sparallel1st}
\widehat{\bfS}_1 = \begin{pmatrix}
\bar{\bfS}(t_1) & \bfL(t_1)^{\top}\widetilde \bfV_1\\
\widetilde \bfU_1^{\top} \bfK(t_1) & \bf0
\end{pmatrix}\, .
\end{align}
\item \textbf{Truncate}:
        Compute the singular value decomposition $\; \wh\bfS_1= \wh \bfP \wh \bfSigma \wh \bfQ^\top$ where $\wh\bfSigma=\diag(\sigma_j)$. Truncate to the tolerance~$\vartheta$ by choosing the new rank $r_1\le 2r$ as the 
		minimal number $r_1$ such that 
		\begin{equation}
		\biggl(\ \sum_{j=r_1+1}^{2r} \sigma_j^2 \biggr)^{1/2} \le \vartheta.
		\end{equation}
		Compute the new factors for the approximation of $\bfA(t_1)$ as follows:
Let $\bfS_1$ be the $r_1\times r_1$ diagonal matrix with the $r_1$ largest singular values and let $\bfP_1\in \R^{2r\times r_1}$ and $\bfQ_1\in \R^{2r\times r_1}$ contain the first $r_1$ columns of $\wh \bfP$ and $\wh \bfQ$, respectively. Finally, set $\bfU_1 = \wh \bfU_1 \bfP_1\in \R^{m\times r_1}$ and
$\bfV_1 = \wh{\bfV}_1 \bfQ_1 \in \R^{n\times r_1}$.\\
\end{enumerate}

The solution at time $t_1$ is then given as $\bfY_1 = \bfU_1 \bfS_1\bfV_1^{\top}$, and the process is repeated until a final time $T$. This algorithm provides a rank $r_k$ for every time point $t_k$. Note that the augmented BUG integrator's augmented coefficient matrix is replaced by \eqref{eq:Sparallel1st}, which can be assembled from information gathered in three fully parallel steps. The key observation that allows such a construction is that the sequential term $\widetilde \bfU_1^\top \widehat \bfF(t) \widetilde \bfV_1^\top$ is of order $O(h^2)$ and can, therefore, be approximated by zero to ensure a robust error bound of order one. Removing the computation of this term allows the computation of all remaining blocks of \eqref{eq:Sparallel1st} in parallel. To provide such an error bound, the following standard properties \cite{KieriLubichWalach} are assumed:
\begin{assumption}\label{as:assumptions}
The right-hand side and the initial condition fulfill the following properties:
\begin{enumerate}
	\item $\bfF$ is Lipschitz-continuous, i.e., for all $\bfZ, \widetilde{\bfZ} \in \mathbb{R}^{m \times n}$ and $0\le t \le T$,
    $$
    \| \bfF(t, \bfZ) - \bfF(t, \widetilde{\bfZ}) \| 
    \leq
    L \| \bfZ - \widetilde{\bfZ} \|.
    $$	
    \item $\bfF$ is bounded, i.e., for all $\bfZ \in \mathbb{R}^{m \times n}$ and $0\le t \le T$, it holds $\| \bfF(t, \bfZ) \| \leq B$.
    \item Let $\mathcal{M}_r$ be the manifold of rank $r$ matrices, where $r\in\mathbb{N}$ is arbitrary. For $\bfY\in\mathcal{M}_r$, the normal part of $\bfF(t, \bfY)$, that is $\bfR(t, \bfY) := (\bfI - \bfP(\bfY))\bfF(t, \bfY)$ is $\varepsilon_r$-small, i.e., $\| \bfR(t, \bfY) \| \leq \varepsilon_r$. Hence, if $\wh\bfY\in\mathcal{M}_{2r}$, then $\| \bfR(t, \wh\bfY) \| \leq \varepsilon_{2r}$ where commonly one can assume $\varepsilon_{2r} \ll \varepsilon_{r}$.
    \item
    The error in the initial value is $\delta$-small:
    $$
    \| \bfY_0 - \bfA_0 \| \le \delta.
    $$\\
\end{enumerate}	
\end{assumption}

Then, the parallel integrator fulfills the following first-order robust error bound.
\begin{theorem}[\cite{CeKL23}, Theorem~4.1] Under the above Assumptions~\ref{as:assumptions}, the error of the parallel rank-adaptive integrator is bounded by
$$
\| \bfY_k - \bfA(t_k)\|  \le C_0 \delta + C_1 h + C_2 \eps_r + C_3  k\vartheta , \qquad 0\le kh \le T,
$$
where constants only depend on the Lipschitz constant and bound of $\bfF$, a bound of second derivatives of the exact solution, and an upper bound of the time stepsize.
\end{theorem}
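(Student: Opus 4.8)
The plan is to follow the standard template for robust low-rank integrators: prove a \emph{local} error bound whose constants never involve $\rho^{-1}=\sigma_r^{-1}$, and then propagate it by a discrete Gr\"onwall argument in which the exact flow serves as the intermediate comparison object. The only new analytic ingredient beyond the augmented BUG analysis of \cite{CeKL22} is a one-step comparison showing that replacing the coupled $2r\times 2r$ Galerkin system \eqref{eq:augmentedSMatrix} by the assembled matrix \eqref{eq:Sparallel1st} perturbs the step by only $O(h^2+h\eps_r)$.

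\emph{Key lemma (robust local error).} For $\bfY_0=\bfU_0\bfS_0\bfV_0^\top\in\M$, let $\bfY_1$ be one step of the parallel integrator and let $\bfB(t)$ solve $\dot\bfB=\bfF(t,\bfB)$ with $\bfB(t_0)=\bfY_0$. I would show $\|\bfY_1-\bfB(t_1)\|\le C(h^2+h\eps_r)+\vartheta$ with $C$ independent of $\sigma_r(\bfY_0)^{-1}$. First observe that one parallel step and one augmented BUG step from the same $\bfY_0$ use the \emph{same} augmented bases $\wh\bfU_1=[\bfU_0,\wt\bfU_1]$, $\wh\bfV_1=[\bfV_0,\wt\bfV_1]$, since they run the same $\bfK$- and $\bfL$-steps; hence their rank-$2r$ solutions differ only through the coefficient matrix, and, the factors being orthonormal, the Frobenius distance equals $\|\wh\bfS_1-\wh\bfS^{\mathrm{BUG}}(t_1)\|$, where $\wh\bfS^{\mathrm{BUG}}$ solves \eqref{eq:augmentedSMatrix}. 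Because the right-hand sides of the $\bfK$-, $\bfL$- and $\bar\bfS$-equations as well as of \eqref{eq:augmentedSMatrix} are bounded by $B$ and are expressed through $\bfF$ alone (no factor $\bfS^{-1}$ — this is precisely why no $\rho^{-1}$ enters), the quantities $\bfK(t),\bfL(t),\bar\bfS(t),\wh\bfS^{\mathrm{BUG}}(t)$ stay within $O(h)$ of their values at $t_0$, so the reconstructed matrices $\bfK(t)\bfV_0^\top$, $\bfU_0\bfL(t)^\top$, $\bfU_0\bar\bfS(t)\bfV_0^\top$, $\wh\bfU_1\wh\bfS^{\mathrm{BUG}}(t)\wh\bfV_1^\top$ all stay within $O(h)$ of $\bfY_0$ on $[t_0,t_1]$ and all coincide with $\bfY_0$ at $t_0$. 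Writing each block of $\wh\bfS_1$ and of $\wh\bfS^{\mathrm{BUG}}(t_1)$ in integral form and using $\wt\bfU_1^\top\bfU_0=0$, $\bfV_0^\top\wt\bfV_1=0$, the $(1,1)$, $(1,2)$ and $(2,1)$ blocks agree up to $O(h^2)$ by Lipschitz continuity of $\bfF$ applied to arguments that coincide at $t_0$ and are $O(h)$-close afterwards. The decisive point is the $(2,2)$ block, set to zero by \eqref{eq:Sparallel1st}: its augmented BUG counterpart is $\int_{t_0}^{t_1}\wt\bfU_1^\top\bfF(s,\wh\bfU_1\wh\bfS^{\mathrm{BUG}}(s)\wh\bfV_1^\top)\wt\bfV_1\,\d s=h\,\wt\bfU_1^\top\bfF(t_0,\bfY_0)\wt\bfV_1+O(h^2)$, and the decomposition $\bfF(t_0,\bfY_0)=\bfM(t_0,\bfY_0)+\bfR(t_0,\bfY_0)$ with $\bfM(t_0,\bfY_0)=\bfU_0\bfU_0^\top\bfF(t_0,\bfY_0)(\bfI-\bfV_0\bfV_0^\top)+\bfF(t_0,\bfY_0)\bfV_0\bfV_0^\top$ gives $\wt\bfU_1^\top\bfM(t_0,\bfY_0)\wt\bfV_1=0$, so only the normal part survives and $\|\wt\bfU_1^\top\bfF(t_0,\bfY_0)\wt\bfV_1\|\le\|\bfR(t_0,\bfY_0)\|\le\eps_r$. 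Thus $\|\wh\bfS_1-\wh\bfS^{\mathrm{BUG}}(t_1)\|\le C(h^2+h\eps_r)$; combining with the $\rho^{-1}$-independent local error bound $\|\wh\bfU_1\wh\bfS^{\mathrm{BUG}}(t_1)\wh\bfV_1^\top-\bfB(t_1)\|\le C(h^2+h\eps_r)$ established in \cite{CeKL22} for the augmented BUG integrator, and with the fact that the truncation step changes the solution by at most $\vartheta$, the lemma follows.

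\emph{Accumulation.} Assume inductively that $e_j:=\|\bfY_j-\bfA(t_j)\|$ is small enough for the local estimates to apply (started from $e_0\le\delta$ and using the standing smallness of $h,\eps_r,\vartheta,\delta$). Let $\bfB_j(t)$ solve \eqref{eq:exact} with $\bfB_j(t_j)=\bfY_j$; Gr\"onwall on \eqref{eq:exact} yields $\|\bfB_j(t_{j+1})-\bfA(t_{j+1})\|\le e^{Lh}e_j$, while the lemma yields $\|\bfY_{j+1}-\bfB_j(t_{j+1})\|\le C(h^2+h\eps_r)+\vartheta$, so by the triangle inequality $e_{j+1}\le(1+\wt Ch)e_j+C(h^2+h\eps_r)+\vartheta$. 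Iterating gives $e_k\le e^{\wt CT}\delta+\sum_{j=0}^{k-1}(1+\wt Ch)^j\big(C(h^2+h\eps_r)+\vartheta\big)$; bounding the geometric sum by $e^{\wt CT}/(\wt Ch)$ for the $C(h^2+h\eps_r)$ contribution (which absorbs one power of $h$ and produces $C_1h+C_2\eps_r$) and simply by $k\,e^{\wt CT}$ for the $\vartheta$ contribution (producing $C_3k\vartheta$), together with $e^{\wt CT}\delta=C_0\delta$, yields the asserted estimate for $0\le kh\le T$.

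The main obstacle I anticipate is keeping the whole argument genuinely robust, i.e. verifying that no step silently reintroduces $\rho^{-1}$: this rests on (i) every substep ODE and every projector appearing in the analysis being expressed through $\bfF$ and orthonormal bases only, never through $\bfS^{-1}$, and (ii) the $(2,2)$-block cancellation above, which is the quantitative reason the discarded serial coupling is harmless to first order. A secondary technical point is the bootstrap keeping $e_j$ in the regime where the local lemma and the nonsmoothness estimate for $\bfP$ are applicable, which is routine given Assumption~\ref{as:assumptions} and an upper bound on $h$; the bound on second derivatives of $\bfA$ enters only through the cited augmented BUG local error.
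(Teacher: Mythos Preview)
The paper does not prove this theorem; it is quoted from \cite{CeKL23} as background, so there is no in-paper proof to compare against. That said, your strategy is correct and matches the construction the paper describes around \eqref{eq:augmentedSMatrix}--\eqref{eq:Sparallel1st}: compare one parallel step to one augmented BUG step blockwise in the $2r\times 2r$ coefficient matrix, observe that the $(1,1)$, $(1,2)$, $(2,1)$ blocks agree up to $O(h^2)$ by Lipschitz continuity, bound the discarded $(2,2)$ block via the tangent/normal splitting $\wt\bfU_1^\top\bfM(t_0,\bfY_0)\wt\bfV_1=0$ so that only $h\eps_r+O(h^2)$ remains, then invoke the robust local error of the augmented BUG integrator from \cite{CeKL22} and pass to the global bound by Lady Windermere's fan exactly as the paper does for its own Theorem~\ref{th:robustglobal}. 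Your care about avoiding $\rho^{-1}$ is well placed and your argument indeed never uses $\bfS^{-1}$; the bootstrap remark at the end is harmless but unnecessary, since the local lemma holds for any $\bfY_0\in\calM$ regardless of its distance to $\bfA(t_0)$.
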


Moreover, as pointed out in \cite[Section~3.3]{CeKL23}, the parallel integrator allows for a step rejection strategy that estimates the normal component of the vector field as $\Vert\widetilde \bfU_1^\top \bfF(\bfY_0) \widetilde \bfV_1^\top\Vert$. This strategy can also be used for the augmented BUG integrator and allows a further rank increase when the normal component is not sufficiently small.

\section{Second-order parallel integrators} \label{sec:2ndorderpar}
In the following, we extend the parallel integrator to order two for a matrix ODE
\begin{align*}
\dot \bfA(t) = \bfF(t,\bfA(t))\;.
\end{align*}
Constructing such an integrator seems non-trivial since the parallel integrator is designed as an approximation of order one. Specifically, setting the term $\widetilde \bfU_1^\top \wh\bfF(t) \widetilde \bfV_1$ in \eqref{eq:augmentedSMatrix} to zero yields a scheme of order one, whereas an accurate approximation of this term in turn leads to sequential update steps. 
The main idea of constructing a high-order parallel integrator is to determine basis matrices $\wh \bfU_1 = [\wh \bfU_0, \widetilde \bfU_2]$, and $\widehat \bfV_1 = [\widehat \bfV_0, \widetilde \bfV_2]$ such that if $\bfA(t_0)\in\mathcal{M}_r$, then
\begin{align*}
    \widehat \bfU_1^\top \bfA(t_1) \widehat \bfV_1 =
\begin{pmatrix} 
\wh\bfU_0^\top \bfA(t_1) \wh\bfV_0 & \wh\bfU_0^\top \bfA(t_1) \widetilde \bfV_2
\\
\widetilde \bfU_2^\top \bfA(t_1) \wh\bfV_0 & \widetilde \bfU_2^\top \bfA(t_1) \widetilde \bfV_2
\end{pmatrix}
\end{align*}
with $\Vert \widetilde \bfU_2^\top \bfA(t_1) \widetilde \bfV_2 \Vert \lesssim h^3 + h \varepsilon_r$ and $\Vert\bfA(t_1) - \widehat \bfU\widehat \bfU^\top \bfA(t_1) \widehat \bfV\widehat \bfV\Vert \lesssim h^3 + h \varepsilon_r$. It can be shown that such a basis exists and takes the form
\begin{alignat*}{2}
    \wh\bfU_1 =\,& \text{orth}([\wh\bfU_0, \bfK(t_1)\wh\bfV_0^{\top}\bfV_{\star}]) \quad &&\text{ with } \wh\bfU_0 = \text{orth}([\bfU_0, \bfF(t_0,\bfY_0)\bfV_0])\,, \\
    \wh\bfV_1 =\,& \text{orth}([\wh\bfV_0, \bfL(t_1)\wh\bfU_0^{\top}\bfU_{\star}]) \quad &&\text{ with } \wh\bfV_0 = \text{orth}([\bfV_0, \bfF(t_0,\bfY_0)^{\top}\bfU_0])\,,
\end{alignat*}
where $\bfK(t_1)$ and $\bfL(t_1)$ are the standard $K$ and $L$ steps, but with augmented basis matrices $\wh\bfU_0$ and $\wh\bfV_0$, and $\bfU_{\star}$ and $\bfV_{\star}$ are first-order approximations of the basis at the half-step. Such an integrator is proposed in Section~\ref{sec:v1}. Moreover, it turns out that instead using the basis
\begin{alignat*}{2}
    \wh\bfU_1 =\,& \text{orth}([\wh\bfU_0, \bfK(t_1)])\,, \quad 
    \wh\bfV_1 =\,& \text{orth}([\wh\bfV_0, \bfL(t_1)]) \,
\end{alignat*}
with the same $\wh\bfU_0$, $\wh\bfV_0$, $\bfK(t_1)$, and $\bfL(t_1)$ as before leads to an improved dependence on the normal component of the flow, similar to the $4r$ rank integrator of \cite{ceruti2024robust}. Specifically, instead of having $O(\varepsilon_r)$ terms in the global error bound, we get $O(\varepsilon_{2r} + h\varepsilon_r)$ terms instead, where commonly we can assume $\varepsilon_{2r}\ll \varepsilon_r$. This integrator shares a similar complexity as the first variant, however requires a singular value decomposition of a $4r \times 4r$ matrix.
In the following, we state these two variants of the second-order parallel integrator and discuss their computational footprint.

\subsection{Variant 1}\label{sec:v1}
The algorithm updates the solution $\bfY_0$ in factorized form $\bfU_0\in\mathbb{R}^{m\times r_0}$, $\bfS_0\in\mathbb{R}^{r_0\times r_0}$, $\bfV_0\in\mathbb{R}^{n\times r_0}$ to $\bfU_1\in\mathbb{R}^{m\times r_1}$, $\bfS_1\in\mathbb{R}^{r_1\times r_1}$, $\bfV_1\in\mathbb{R}^{n\times r_1}$. I.e., the solution is updated in factorized form from time $t_0$ to time $t_1 = t_0 + h$. Let us denote $r_0$ as $r$ in the following integrator:
\begin{enumerate}
\item Construct augmented basis matrices $\wh\bfU_0 = \text{orth}([\bfU_0, \bfF(t_0,\bfY_0)\bfV_0])$ and $\wh\bfV_0 = \text{orth}([\bfV_0, \bfF(t_0,\bfY_0)^{\top}\bfU_0])$.\label{step:init_augment}
\item Determine the augmented basis matrices $\widehat{\mathbf{U}}_1\in \mathbb{R}^{m\times 3r}$ and $\widehat{\mathbf{V}}_1\in \mathbb{R}^{n\times 3r}$ as well as the coefficient matrix $\bar{\mathbf{S}}(t_1) \in\mathbb{R}^{2r \times 2r}$ (in parallel):
\\[2mm]
\textbf{K-step}: Integrate from $t=t_0$ to $t_1$ the $m \times 2r$ matrix differential equation
\begin{align*}
\dot{\bfK}(t) =\,& \bfF(t,\bfK(t)\widehat \bfV_0^{\top})\widehat \bfV_0\, , \quad \bfK(t_0) = \bfU_0 \bfS_0 \bfV_0^{\top}\widehat{\bfV}_0\, .
\end{align*}
Given $\bfV_{\star} = \text{orth}(\bfY_0^{\top}\bfU_0 + \frac12 h \bfF(t_0,\bfY_0)^{\top}\bfU_0)$ construct $\widehat \bfU_1= [\widehat \bfU_0, \widetilde \bfU_2] = \text{orth}([\widehat \bfU_0, \bfK(t_1)\widehat{\bfV}_0^{\top}\bfV_{\star}])$. 
\\[2mm]
\textbf{L-step}: Integrate from $t=t_0$ to $t_1$ the $n \times 2r$ matrix differential equation
\begin{align*}
\dot{\bfL}(t) =\,& \bfF(t,\widehat \bfU_0 \bfL(t)^{\top})^{\top}\widehat \bfU_0\, , \quad \bfL(t_0) = \bfV_0 \bfS_0^{\top} \bfU_0^{\top}\widehat{\bfU}_0\, .
\end{align*}
Given $\bfU_{\star} = \text{orth}(\bfY_0\bfV_0 + \frac12 h \bfF(t_0,\bfY_0)\bfV_0)$ construct the time-updated basis $\widehat \bfV_1= [\widehat \bfV_0, \widetilde \bfV_2] = \text{orth}([\widehat \bfV_0, \bfL(t_1)\widehat \bfU_0^{\top}\bfU_{\star}])$. 
\\[2mm]
\textbf{S-step}: Integrate from $t=t_0$ to $t_1$ the $2r \times 2r$ matrix differential equation
\begin{align*}
\dot{\bar \bfS}(t) = \widehat \bfU_0^{\top}\bfF(t,\widehat{\bfU}_0\bar \bfS(t)\widehat{\bfV}_0^{\top})\widehat \bfV_0\, , \quad \bar \bfS(t_0) = \widehat{\bfU}_0^{\top}\bfU_0 \bfS_0 \bfV_0^{\top}\widehat{\bfV}_0\, .
\end{align*}
\item \textbf{Augment}: Set up the augmented coefficient matrix $\widehat{\bfS}_1 \in\mathbb{R}^{3r\times 3r}$ as
\begin{align*}
\widehat{\bfS}_1 = \begin{pmatrix}
\bar{\bfS}(t_1) & \bfL(t_1)^{\top}\widetilde \bfV_2\\
\widetilde \bfU_2^{\top} \bfK(t_1) & \bf0
\end{pmatrix}\, .
\end{align*}
\item \textbf{Truncate}:
        Compute the singular value decomposition $\; \wh\bfS_1= \wh \bfP \wh \bfSigma \wh \bfQ^\top$ where $\wh\bfSigma=\diag(\sigma_j)$. Truncate to the tolerance~$\vartheta$ by choosing the new rank $r_1\le 3r$ as the 
		minimal number $r_1$ such that 
		\begin{equation}
		\biggl(\ \sum_{j=r_1+1}^{3r} \sigma_j^2 \biggr)^{1/2} \le \vartheta.
		\end{equation}
		Compute the new factors for the approximation of $\bfA(t_1)$ as follows:
Let $\bfS_1$ be the $r_1\times r_1$ diagonal matrix with the $r_1$ largest singular values and let $\bfP_1\in \R^{3r\times r_1}$ and $\bfQ_1\in \R^{3r\times r_1}$ contain the first $r_1$ columns of $\wh \bfP$ and $\wh \bfQ$, respectively. Finally, set $\bfU_1 = \wh \bfU_1 \bfP_1\in \R^{m\times r_1}$ and
$\bfV_1 = \wh{\bfV}_1 \bfQ_1 \in \R^{n\times r_1}$.\\
\end{enumerate}

The approximation after one time step is given by 
\begin{equation}\label{Y1-rabug}
	 \bfY_1 = \bfU_1 \bfS_1 \bfV_1^\top \approx \bfY(t_1).
\end{equation}
Following \cite[Section~3.3]{CeKL23}, this integrator allows for a step rejection strategy that estimates the normal component of the vector field as $\Vert\widetilde \bfU_2^\top \bfF(\bfY_0) \widetilde \bfV_2^\top\Vert$. 
Before presenting another variant of a second--order parallel integrator, we wish to understand the computation demands of the above algorithm.

\subsection{Computational costs}
A crucial factor in the overall computational costs is the time integration of the three matrix ODEs for $\bfK$, $\bfL$, and $\bfS$. To discuss the costs of constructing the right-hand sides, let us look at a simple matrix ODE with a right-hand side
\begin{align}\label{eq:F}
    \bfF(t, \bfY) = \sum_{\ell = 1}^M\bfC_{\ell}(t)\bfY\bfD_{\ell}(t)\,
\end{align}
where $\bfC_{\ell}$ and $\bfD_{\ell}$ are sparse matrices with $c_{\ell}\cdot m$ entries in $\bfC_{\ell}$ and $d_{\ell}\cdot n$ entries in $\bfD_{\ell}$. Such problems are widespread and arise, for example, in computational radiation therapy \cite{kusch2023robust}, where $\bfC_{\ell}$ are sparse stencil matrices (which are commonly independent of time) and $\bfD_{\ell}$ are sparse flux matrices (which can be time-dependent). Then, with $\bfY = \bfK\bfV^{\top}\in\mathbb{R}^{m\times n}$, where $\bfK\in\mathbb{R}^{m\times \widehat r}$, $\bfV\in\mathbb{R}^{n\times \widehat r}$ with a generic rank $\widehat r$, the projection $\bfF(t,\bfY)\bfV$ reads
\begin{align*}
    \bfF(t,\bfY)\bfV = \sum_{\ell = 1}^M\bfC_{\ell}(t)\bfK (\bfV^{\top}\bfD_{\ell}(t)\bfV)\,.
\end{align*}
The computation of $\widetilde{\bfD}_{\ell}(t) := \bfV^{\top}\bfD_{\ell}(t)\bfV$ requires $\widehat r^2 \cdot n \cdot d_{\ell}$ operations. The computation of $\bfK\widetilde{\bfD}_{\ell}(t)$ then requires $\widehat r^2 \cdot m$ and the multiplication with $\bfC_{\ell}(t)$ requires $\widehat r \cdot m \cdot c_{\ell}$ operations. Hence, a single computation of $\bfF(t_0,\bfY_0)\bfV_0$ as required in step \ref{step:init_augment} with $\widehat r \equiv r$ has computational costs 
\begin{align*}
    C_{\mathrm{aug}}^K := \sum_{\ell = 1}^M r \cdot (r\cdot n \cdot d_{\ell} + r\cdot m + m \cdot c_{\ell}) \,.
\end{align*}
In the same manner, the computation of $\bfF(t_0,\bfY_0)^{\top}\bfU_0$ requires
\begin{align*}
    C_{\mathrm{aug}}^L := \sum_{\ell = 1}^M r \cdot (r\cdot m \cdot c_{\ell} + r\cdot n + n \cdot d_{\ell}) \,
\end{align*}
operations. Solving the $K$ and $L$ steps can become substantially more expensive if we use a time-integration method that performs $N_{\mathrm{ode}}$ evaluations of the right-hand side. Then, with $\wh r \equiv 2r$, we have
\begin{align*}
    C_{\mathrm{ode}}^K :=\,& N_{\mathrm{ode}}\cdot \sum_{\ell = 1}^M r \cdot (4r\cdot n \cdot d_{\ell} + 4r\cdot m + 2m \cdot c_{\ell}) \,,\\
    C_{\mathrm{ode}}^L :=\,& N_{\mathrm{ode}}\cdot \sum_{\ell = 1}^M r \cdot (4r\cdot m \cdot c_{\ell} + 4r\cdot n + 2n \cdot d_{\ell}) \,.
\end{align*}
Moreover, with $\bfY = \wh\bfU_0\bfS\wh\bfV_0^{\top}$, the $S$-step requires the computation of
\begin{align*}
    \wh\bfU_0^{\top}\bfF(t,\bfY)\wh\bfV_0 = \sum_{\ell = 1}^M(\wh\bfU_0^{\top}\bfC_{\ell}(t)\wh\bfU_0) \bfS (\wh\bfV_0^{\top}\bfD_{\ell}(t)\wh\bfV_0)\,.
\end{align*}
Here, the costs to compute $\widetilde{\bfC}_{\ell}(t) := \wh\bfU_0^{\top}\bfC_{\ell}(t)\wh\bfU_0$ and $\widetilde{\bfD}_{\ell}(t) := \wh\bfV_0^{\top}\bfD_{\ell}(t)\wh\bfV_0$ are $4 r^2 (n \cdot d_{\ell} + m \cdot c_{\ell})$ operations. Moreover, the computation of $\widetilde{\bfC}_{\ell}(t)\bfS(t)\widetilde{\bfD}_{\ell}(t)$ requires $16r^3$ operations. Hence, the costs to solve the $S$-step are
\begin{align*}
    C_{\mathrm{ode}}^S :=\,& N_{\mathrm{ode}}\cdot \sum_{\ell = 1}^M r \cdot (4r\cdot n \cdot d_{\ell} + 4r\cdot m \cdot c_{\ell} + 16 r^2) \,.
\end{align*}
Lastly, the computation of the singular value decomposition to truncate the solution requires $O(r^3)$, and the orthonormalizations require $O(nr^2)$ and $O(mr^2)$ operations, respectively. Note that for this choice of the right-hand side, the computation of the $S$-step is usually the most expensive part. Moreover, when the flux matrices in \eqref{eq:F} are time-independent, the construction of $\widetilde{\bfC}_{\ell}$ and $\widetilde{\bfD}_{\ell}$ is required only once per time step of the parallel integrator.

\subsection{Variant 2}\label{sec:v2}
An alternative approach that does not require the computation of $\bfU_{\star}$ and $\bfV_{\star}$ and yields an improved error bound but uses a rank $4r$ augmented coefficient matrix (and requires its singular value decomposition) is the following:
\begin{enumerate}
\item Construct augmented basis matrices $\wh\bfU_0 = \text{orth}([\bfU_0, \bfF(t_0,\bfY_0)\bfV_0])$ and $\wh\bfV_0 = \text{orth}([\bfV_0, \bfF(t_0,\bfY_0)^{\top}\bfU_0])$.
\item Determine the augmented basis matrices $\widehat{\mathbf{U}}_1\in \mathbb{R}^{m\times 4r}$ and $\widehat{\mathbf{V}}_1\in \mathbb{R}^{n\times 4r}$ as well as the coefficient matrix $\bar{\mathbf{S}}(t_1) \in\mathbb{R}^{2r \times 2r}$ (in parallel):
\\[2mm]
\textbf{K-step}: Integrate from $t=t_0$ to $t_1$ the $m \times 2r$ matrix differential equation
\begin{align*}
\dot{\bfK}(t) =\,& \bfF(t,\bfK(t)\widehat \bfV_0^{\top})\widehat \bfV_0\, , \quad \bfK(t_0) = \bfU_0 \bfS_0 \bfV_0^{\top}\widehat{\bfV}_0\, .
\end{align*}
Construct $\widehat \bfU_1= [\widehat \bfU_0, \widetilde \bfU_2] = \text{orth}([\widehat \bfU_0, \bfK(t_1)])$. 
\\[2mm]
\textbf{L-step}: Integrate from $t=t_0$ to $t_1$ the $n \times 2r$ matrix differential equation
\begin{align*}
\dot{\bfL}(t) =\,& \bfF(t,\widehat \bfU_0 \bfL(t)^{\top})^{\top}\widehat \bfU_0\, , \quad \bfL(t_0) = \bfV_0 \bfS_0^{\top} \bfU_0^{\top}\widehat{\bfU}_0\, .
\end{align*}
Construct $\widehat \bfV_1= [\widehat \bfV_0, \widetilde \bfV_2] = \text{orth}([\widehat \bfV_0, \bfL(t_1)])$. 
\\[2mm]
\textbf{S-step}: Solve the $2r \times 2r$ matrix differential equation
\begin{align*}
\dot{\bar \bfS}(t) = \widehat \bfU_0^{\top}\bfF(t,\widehat{\bfU}_0\bar \bfS(t)\widehat{\bfV}_0^{\top})\widehat \bfV_0\, , \quad \bar \bfS(t_0) = \widehat{\bfU}_0^{\top}\bfU_0 \bfS_0 \bfV_0^{\top}\widehat{\bfV}_0\, .
\end{align*}
\item \textbf{Augment}: Set up the augmented coefficient matrix $\widehat{\bfS}_1 \in\mathbb{R}^{4r\times 4r}$ as
\begin{align*}
\widehat{\bfS}_1 = \begin{pmatrix}
\bar{\bfS}(t_1) & \bfL(t_1)^{\top}\widetilde \bfV_2\\
\widetilde \bfU_2^{\top} \bfK(t_1) & \bf0
\end{pmatrix}\, .
\end{align*}
\item \textbf{Truncate}:
        Compute the singular value decomposition $\; \wh\bfS_1= \wh \bfP \wh \bfSigma \wh \bfQ^\top$ where $\wh\bfSigma=\diag(\sigma_j)$. Truncate to the tolerance~$\vartheta$ by choosing the new rank $r_1\le 4r$ as the 
		minimal number $r_1$ such that 
		\begin{equation}
		\biggl(\ \sum_{j=r_1+1}^{4r} \sigma_j^2 \biggr)^{1/2} \le \vartheta.
		\end{equation}
		Compute the new factors for the approximation of $\bfA(t_1)$ as follows:
Let $\bfS_1$ be the $r_1\times r_1$ diagonal matrix with the $r_1$ largest singular values and let $\bfP_1\in \R^{4r\times r_1}$ and $\bfQ_1\in \R^{4r\times r_1}$ contain the first $r_1$ columns of $\wh \bfP$ and $\wh \bfQ$, respectively. Finally, set $\bfU_1 = \wh \bfU_1 \bfP_1\in \R^{m\times r_1}$ and
$\bfV_1 = \wh{\bfV}_1 \bfQ_1 \in \R^{n\times r_1}$.\\
\end{enumerate}

The main difference to the integrator in Section~\ref{sec:v1} is the use of the entire span of $\bfK(t_1)$ and $\bfL(t_1)$ to construct $\widetilde\bfU_2$ and $\widetilde\bfV_2$, respectively. Note, however, that the augmentation step and solving the ordinary differential equations for $\bfK(t)$, $\bfL(t)$, and $\bar{\bfS}(t)$ remain the same, leading to comparable computational costs. 

\section{Robust error bound}\label{sec:robusterror}
The proposed parallel integrators have a second-order robust error bound. To derive this bound, we use the same notation for both parallel integrators in Sections~\ref{sec:v1} and \ref{sec:v2}; however, we state in each lemma to which integrator we refer. In the remainder, constants are denoted by either upper or lower-case $C$ variables, and we reuse variable names of constants for ease of presentation. All constants only depend on the Lipschitz constant and bound of $\bfF$, a bound of third derivatives of the exact solution, and an upper bound of the time stepsize. That is, there is no dependence on the curvature of $\mathcal{M}$ or $\rho^{-1}$. Such dependencies would arise when for example Taylor expanding the bases or the projected dynamics, which is avoided in the following derivations. Let us first state the main result:
\begin{theorem}\label{th:robustglobal} Under the above Assumptions~\ref{as:assumptions}, the error of the parallel integrator of Section~\ref{sec:v1} is bounded by
$$
\| \bfY_k - \bfA(t_k)\|  \le C_0^{(1)} \delta + C_1^{(1)} h^2 + C_2^{(1)} \eps_r + C_3^{(1)}  k\vartheta , \qquad 0\le kh \le T,
$$
whereas the error of the parallel integrator of Section~\ref{sec:v2} is bounded by
$$
\| \bfY_k - \bfA(t_k)\|  \le C_0^{(2)} \delta + C_1^{(2)} h^2 + C_2^{(2)} \eps_{2r} + C_3^{(2)} h\eps_r + C_4^{(2)}  k\vartheta , \qquad 0\le kh \le T\,.
$$
Here, constants $C_i^{(j)}$ only depend on the Lipschitz constant and bound of $\bfF$, a bound of third derivatives of the exact solution, and an upper bound of the time stepsize.
\end{theorem}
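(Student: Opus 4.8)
The plan is to follow the by-now-standard strategy for robust DLRA integrators \cite{KieriLubichWalach,CeKL22,CeKL23,ceruti2024robust}: bound the \emph{local} (one-step) error of the scheme started from an arbitrary matrix, and then propagate these local errors into the global error by Lipschitz continuity of the exact flow. Fix a step from $t_{k-1}$ to $t_k=t_{k-1}+h$, write $\bfY_0$ for the current iterate $\bfY_{k-1}$ and denote $r_{k-1}$ by $r$ for this step (so $\bfY_0\in\mathcal{M}_r$ and $\|(\bfI-\bfP(\bfY_0))\bfF(t_{k-1},\bfY_0)\|\le\eps_r$), and let $\bfB(t)$ solve $\dot\bfA=\bfF(t,\bfA)$ with $\bfB(t_{k-1})=\bfY_0$. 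With the local error $\ell_k:=\|\bfY_k-\bfB(t_k)\|$, Assumption~\ref{as:assumptions}(1) and Gronwall's lemma give $\|\bfB(t_k)-\bfA(t_k)\|\le e^{Lh}\|\bfY_{k-1}-\bfA(t_{k-1})\|$, hence $\|\bfY_k-\bfA(t_k)\|\le\ell_k+e^{Lh}\|\bfY_{k-1}-\bfA(t_{k-1})\|$, and a discrete Gronwall argument yields $\|\bfY_k-\bfA(t_k)\|\le e^{Lt_k}\bigl(\delta+\sum_{j=1}^k\ell_j\bigr)$. It therefore suffices to prove the one-step bounds
\[
  \ell_k \le c_1 h^3 + c_2 h\,\eps_r + \vartheta \quad\text{(Variant 1)}, \qquad \ell_k \le c_1 h^3 + c_2 h\,\eps_{2r} + c_3 h^2\eps_r + \vartheta \quad\text{(Variant 2)},
\]
with all $c_i$ independent of $\rho^{-1}$; summing over $k\le T/h$ steps then converts these into the two asserted global bounds, the $h^3$ terms summing to $O(h^2)$, the $h\eps$ terms to $O(\eps)$, and $\vartheta$ to $k\vartheta$.

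For one step, the truncation removes singular mass of size at most $\vartheta$, so $\|\bfY_k-\wh\bfU_1\wh\bfS_1\wh\bfV_1^\top\|\le\vartheta$. Setting $\wh\bfB_1:=\wh\bfU_1(\wh\bfU_1^\top\bfB(t_k)\wh\bfV_1)\wh\bfV_1^\top$ and using that $\wh\bfU_1,\wh\bfV_1$ have orthonormal columns,
\[
  \ell_k \le \vartheta + \underbrace{\|\wh\bfS_1-\wh\bfU_1^\top\bfB(t_k)\wh\bfV_1\|}_{\text{coefficient defect}} + \underbrace{\|\bfB(t_k)-\wh\bfB_1\|}_{\text{basis defect}}.
\]
The point on which robustness hinges is that neither defect may be estimated by perturbing the rank $\le 3r$ (resp.\ $4r$) projectors $\wh\bfU_1\wh\bfU_1^\top$, $\wh\bfV_1\wh\bfV_1^\top$, since such a perturbation bound carries a factor $\rho^{-1}$ (cf.\ the \cite[Lemma~4.2]{KochLubich07} estimate recalled in Section~\ref{sec:background}); the analysis must avoid this. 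Since only the \emph{spans} produced by $\mathrm{orth}(\cdot)$ enter these projectors, one instead exploits that by construction certain explicitly available matrices have column space inside $\mathrm{span}(\wh\bfU_1)$ or row space inside $\mathrm{span}(\wh\bfV_1)$, so that projecting them is an exact identity; every remaining discrepancy is then either a genuine $O(h^3)$ Taylor remainder of $\bfB$ or of the matrix ODEs for $\bfK,\bfL,\bar\bfS$ — controlled by the bounds on $\bfF$ and on the third derivatives of the exact solution — or a normal component $(\bfI-\bfP(\cdot))\bfF$ bounded by $\eps_r$ or $\eps_{2r}$.

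For the basis defect one writes $\|\bfB(t_k)-\wh\bfB_1\|\le\|(\bfI-\wh\bfU_1\wh\bfU_1^\top)\bfB(t_k)\|+\|\bfB(t_k)(\bfI-\wh\bfV_1\wh\bfV_1^\top)\|$ and, for the first term, produces a matrix $\bfW$ with $\mathrm{range}(\bfW)\subset\mathrm{span}(\wh\bfU_1)$ and $\|\bfB(t_k)-\bfW\|\lesssim h^3+h\eps$, so that $(\bfI-\wh\bfU_1\wh\bfU_1^\top)\bfB(t_k)=(\bfI-\wh\bfU_1\wh\bfU_1^\top)(\bfB(t_k)-\bfW)$ inherits the bound. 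Such a $\bfW$ is assembled from the step's data: the initial augmentation $\wh\bfU_0=\mathrm{orth}([\bfU_0,\bfF_0\bfV_0])$, $\bfF_0:=\bfF(t_{k-1},\bfY_0)$, already contains $\bfY_0$ and the $\bfV_0$-aligned part of $\dot\bfB(t_{k-1})=\bfF_0$, so the constant and first-order Taylor terms of $\bfB$ lie in $\mathrm{span}(\wh\bfU_1)$ up to an $h\eps_r$ normal remnant; the endpoint $\bfK(t_k)$ of the $K$-step $\dot\bfK=\bfF(t,\bfK\wh\bfV_0^\top)\wh\bfV_0$ supplies, through its own Taylor expansion about $t_{k-1}$, the second-order Taylor term of $\bfB$ in the column directions surviving the $\wh\bfV_0$-projection; and, for Variant~1, right-multiplying $\bfK(t_k)$ by $\wh\bfV_0^\top\bfV_\star$ with $\bfV_\star=\mathrm{orth}(\bfY_0^\top\bfU_0+\frac12 h\,\bfF_0^\top\bfU_0)$ realigns the retained $r$ columns with the co-range of $\bfB$ at the midpoint $t_{k-1}+h/2$, which is exactly what forces the discarded directions to enter only at $O(h^3)$. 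In Variant~2 this realignment is unnecessary because the full $2r$-dimensional column space of $\bfK(t_k)$ is kept: this removes the $\bfV_\star$-induced $\eps_r$ term and lets the remaining normal remnants be measured on $\mathcal{M}_{2r}$ (hence $\eps_{2r}$), at the price of a $4r$-dimensional basis. The symmetric argument with $\bfL$, $\bfU_\star$ and $\wh\bfV_0$ handles the row space. The coefficient defect is treated block by block against $\wh\bfU_1^\top\bfB(t_k)\wh\bfV_1$ in the $[\wh\bfU_0,\widetilde\bfU_2]\times[\wh\bfV_0,\widetilde\bfV_2]$ partition: the $(1,1)$ block $\bar\bfS(t_k)$ is a Galerkin approximation on the \emph{fixed} pair $\wh\bfU_0,\wh\bfV_0$ (which carry the first-order tangent information) and matches $\wh\bfU_0^\top\bfB(t_k)\wh\bfV_0$ to $O(h^3+h\eps)$; the off-diagonal blocks $\bfL(t_k)^\top\widetilde\bfV_2$ and $\widetilde\bfU_2^\top\bfK(t_k)$ match $\wh\bfU_0^\top\bfB(t_k)\widetilde\bfV_2$ and $\widetilde\bfU_2^\top\bfB(t_k)\wh\bfV_0$ to the same order; and the dropped $(2,2)$ block equals $\widetilde\bfU_2^\top\bfB(t_k)\widetilde\bfV_2$ up to $O(h^3+h\eps)$, which the basis analysis already bounds by $h^3+h\eps$.

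The main obstacle is precisely this basis-approximation step: organizing the Taylor expansions of $\bfB$, $\bfK$, $\bfL$ and of the half-step objects $\bfU_\star,\bfV_\star$ so that the column and row spaces of $\bfB(t_k)$ are captured to third order, while every error term provably takes the form (Taylor remainder) or (normal component) and never conceals a perturbation of a low-rank projector; the accompanying bookkeeping of \emph{which} $\eps$ ($\eps_r$ versus $\eps_{2r}$) occurs at which power of $h$ is the delicate point — in particular showing that the rank-$r$ realignment of Variant~1 contributes at order $h\eps_r$, whereas Variant~2 postpones the $\eps_r$ contribution to order $h^2$ and otherwise only sees $\eps_{2r}$. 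Once the one-step estimates are in place, the discrete Gronwall step of the first paragraph and the telescoping sum over $k\le T/h$ steps give the two global error bounds, with all constants depending only on $L$, $B$, a bound on the third derivatives of the exact solution, and an upper bound on $h$, as claimed.
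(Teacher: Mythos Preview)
Your proposal is correct and follows essentially the same route as the paper: the same three-term local split (basis defect, coefficient defect, truncation), the same block-by-block coefficient analysis, and Lady Windermere's fan for the global bound. The one place where the paper is more concrete than your sketch is the basis-defect estimate: rather than organising raw Taylor expansions, the paper invokes the midpoint rule $\bfB(t_k)=\bfY_0+h\bfF(t_{\sfrac12},\cdot)+O(h^3)$ evaluated at an explicit half-step BUG approximation ($\wh\bfY_\star$ for Variant~2, the rank-$r$ $\bfY_\star$ for Variant~1), and then uses $\bfP_{\wh\bfU_1}^\perp\bfK(t_1)=0$ (resp.\ $\bfP_{\wh\bfU_1}^\perp\bfK(t_1)\wh\bfV_0^\top\bfV_\star=0$) together with a second application of the midpoint rule to $\bfK$ to cancel the $h\bfF$ term up to an $\eps$-remainder---this is exactly your ``realignment at the midpoint'' idea made precise, and it is what produces the $\eps_{2r}$ versus $\eps_r$ distinction between the two variants.
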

To prove this theorem, let us start by bounding the local error $\Vert \widetilde\bfA(t_1) - \bfY_1 \Vert$, where $\bfY_1$ is the solution obtained with the parallel integrator described in either Section~\ref{sec:v1} or Section~\ref{sec:v2}, respectively. $\widetilde \bfA(t)$ is the solution to the full problem with a low-rank initial condition, i.e.,
\begin{align*}
\dot{\widetilde \bfA}(t) = \bfF(t,\widetilde \bfA(t)), \qquad \widetilde{\bfA}(t_0) = \bfY_0 = \bfU_0\bfS_0\bfV_0^{\top}\;.
\end{align*}
Moreover, the solution before truncation is denoted as $\wh \bfY_1 = \wh\bfU_1 \wh\bfS_1\wh\bfV_1^{\top}$. To bound the local error, we use a triangular inequality
\begin{align}\label{eq:boundGlobal}
\Vert \widetilde \bfA(t_1) - \bfY_1 \Vert \leq\,& \Vert \widetilde \bfA(t_1) - \widehat \bfU_1\widehat{\bfU}_1^{\top}\widetilde \bfA(t_1)\widehat \bfV_1\widehat{\bfV}_1^{\top} \Vert + \Vert \widehat \bfU_1\widehat{\bfU}_1^{\top}\widetilde \bfA(t_1)\widehat \bfV_1\widehat{\bfV}_1^{\top} - \wh\bfY_1 \Vert \nonumber \\
\,& + \Vert \wh\bfY_1 - \bfY_1 \Vert\;.
\end{align}
Let us bound all individual terms on the right-hand side of \eqref{eq:boundGlobal}. We start with the projected full-rank solution $\widetilde \bfA(t_1)$ using the basis generated with the parallel integrator described in Section~\ref{sec:v2}:
\begin{lemma}\label{le:v2Proj}
Given the Assumptions~\ref{as:assumptions}, the full solution projected onto the basis of the parallel integrator from Section~\ref{sec:v2} fulfills
\begin{align*}
\Vert \widetilde \bfA(t_1) - \widehat \bfU_1\widehat{\bfU}_1^{\top}\widetilde \bfA(t_1)\widehat \bfV_1\widehat{\bfV}_1^{\top} \Vert \leq C_1 h^3 + C_2\varepsilon_{2r} h+ C_3\varepsilon_{r} h^2\;,
\end{align*}
where constants $C_i$ only depend on the Lipschitz constant and bound of $\bfF$, a bound of third derivatives of the exact solution, and an upper bound of the time stepsize.
\end{lemma}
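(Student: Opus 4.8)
The plan is to show that the augmented basis $\widehat\bfU_1 = \mathrm{orth}([\widehat\bfU_0, \bfK(t_1)])$ captures $\widetilde\bfA(t_1)$ up to the claimed accuracy, and symmetrically for $\widehat\bfV_1$, then combine the two one-sided projection errors. The starting point is to expand $\widetilde\bfA(t_1)$ by a Taylor expansion in $h$ around $t_0$: since $\widetilde\bfA$ solves the full ODE with low-rank initial data $\bfY_0$, we have $\widetilde\bfA(t_1) = \bfY_0 + h\bfF(t_0,\bfY_0) + \frac{h^2}{2}\frac{d}{dt}\bfF(t,\widetilde\bfA(t))\big|_{t_0} + O(h^3)$, where the $O(h^3)$ constant depends only on a bound on third derivatives of the exact solution (Assumption~\ref{as:assumptions} together with the smoothness that the boundedness and Lipschitz continuity of $\bfF$ provide for $\widetilde\bfA$). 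The key is that $\bfY_0 = \bfU_0\bfS_0\bfV_0^\top$ lies in $\mathrm{range}(\bfU_0)\subset\mathrm{range}(\widehat\bfU_0)$ and $h\bfF(t_0,\bfY_0)$ lies in $\mathrm{range}([\bfU_0,\bfF(t_0,\bfY_0)\bfV_0]) \cdot(\cdots)^\top + \cdots$ — more precisely, $\bfF(t_0,\bfY_0) = \bfP(\bfY_0)\bfF(t_0,\bfY_0) + \bfR(t_0,\bfY_0)$, and the tangential part $\bfP(\bfY_0)\bfF(t_0,\bfY_0)$ has its left range inside $\mathrm{span}(\bfU_0,\bfF(t_0,\bfY_0)\bfV_0)\subseteq \mathrm{range}(\widehat\bfU_0)$ by construction of $\widehat\bfU_0$ in Step~1, while the normal part contributes only $\varepsilon_r$. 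This is why the zeroth- and first-order terms of the expansion are absorbed by $\widehat\bfU_0$ up to an $h\varepsilon_r$ error.

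Next I would handle the second-order term $\frac{h^2}{2}\frac{d}{dt}\bfF\big|_{t_0}$. The claim is that this term's left range is captured by $\bfK(t_1)$ up to $O(h^3) + O(h^2\varepsilon_r)$. To see this, compare $\bfK(t_1)$, which solves $\dot\bfK = \bfF(t,\bfK\widehat\bfV_0^\top)\widehat\bfV_0$ with $\bfK(t_0) = \bfU_0\bfS_0\bfV_0^\top\widehat\bfV_0$, against the full solution projected onto the co-range $\widehat\bfV_0$. One shows $\|\bfK(t_1)\widehat\bfV_0^\top - \widetilde\bfA(t_1)\widehat\bfV_0\widehat\bfV_0^\top\| \lesssim h\varepsilon_r + h^3$ by a Gronwall/variation-of-constants argument: the defect of $\widetilde\bfA(t)\widehat\bfV_0\widehat\bfV_0^\top$ as an approximate solution of the $\bfK$-equation is $(\bfF(t,\widetilde\bfA(t)) - \bfF(t,\widetilde\bfA(t)\widehat\bfV_0\widehat\bfV_0^\top))\widehat\bfV_0$, which by Lipschitz continuity is controlled by $\|\widetilde\bfA(t)(\bfI - \widehat\bfV_0\widehat\bfV_0^\top)\|$; and this quantity is $O(h\varepsilon_r + h^2)$ because $(\bfI-\widehat\bfV_0\widehat\bfV_0^\top)$ annihilates both $\bfY_0$ and the co-range of the first-order term. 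Hence the span of $[\widehat\bfU_0,\bfK(t_1)]$ contains $\widetilde\bfA(t_1)\widehat\bfV_0\widehat\bfV_0^\top$ up to $O(h^3) + O(h^2\varepsilon_r)$, and since $\widetilde\bfA(t_1)(\bfI-\widehat\bfV_0\widehat\bfV_0^\top)$ is itself $O(h\varepsilon_r + h^2)$ (same annihilation argument, one order lower), the full $\widetilde\bfA(t_1)$ is within $O(h^3 + h^2\varepsilon_r + h\varepsilon_r)$... here one must be careful: the clean way is to argue directly that $(\bfI - \widehat\bfU_1\widehat\bfU_1^\top)\widetilde\bfA(t_1) = (\bfI - \widehat\bfU_1\widehat\bfU_1^\top)(\widetilde\bfA(t_1) - \bfK(t_1)\widehat\bfV_0^\top) $ plus a term that vanishes, and bound the difference by the local $\bfK$-error above, which gives the $h^3 + h^2\varepsilon_r$ part; the $\varepsilon_{2r}h$ term enters because $\bfK(t_1)\widehat\bfV_0^\top$ is a rank-$2r$ object, so the residual normal component of $\bfF$ evaluated along it is $\varepsilon_{2r}$-small rather than $\varepsilon_r$-small, improving the lowest-order normal term from $h\varepsilon_r$ to $h\varepsilon_{2r}$.

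The symmetric argument bounds $\|(\bfI - \widehat\bfV_1\widehat\bfV_1^\top)\widetilde\bfA(t_1)^\top\|$ by $C h^3 + C\varepsilon_{2r}h + C\varepsilon_r h^2$ using the $\bfL$-step. Finally I combine the two one-sided bounds via the standard identity
\begin{align*}
\widetilde\bfA(t_1) - \widehat\bfU_1\widehat\bfU_1^\top\widetilde\bfA(t_1)\widehat\bfV_1\widehat\bfV_1^\top = (\bfI - \widehat\bfU_1\widehat\bfU_1^\top)\widetilde\bfA(t_1) + \widehat\bfU_1\widehat\bfU_1^\top\widetilde\bfA(t_1)(\bfI - \widehat\bfV_1\widehat\bfV_1^\top),
\end{align*}
so that $\|\widetilde\bfA(t_1) - \widehat\bfU_1\widehat\bfU_1^\top\widetilde\bfA(t_1)\widehat\bfV_1\widehat\bfV_1^\top\| \le \|(\bfI - \widehat\bfU_1\widehat\bfU_1^\top)\widetilde\bfA(t_1)\| + \|\widetilde\bfA(t_1)(\bfI - \widehat\bfV_1\widehat\bfV_1^\top)\|$, giving the stated estimate. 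The main obstacle I anticipate is the bookkeeping in the middle step: ensuring that the Taylor remainder and the Gronwall constants genuinely depend only on $L$, $B$, and bounds on derivatives of the \emph{exact} solution (never on $\rho^{-1}$), which forces one to avoid differentiating the projector $\bfP$ or the basis matrices, and instead to work throughout with differences of $\bfF$ evaluated at nearby full-space arguments and exploit the annihilation properties of $\bfI - \widehat\bfU_0\widehat\bfU_0^\top$ and $\bfI - \widehat\bfV_0\widehat\bfV_0^\top$ on the low-order terms of the expansion — together with the observation that the $\varepsilon_{2r}$ (rather than $\varepsilon_r$) improvement hinges specifically on $\bfK(t_1)$ and $\bfL(t_1)$ being rank-$2r$ iterates.
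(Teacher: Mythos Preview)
Your overall architecture matches the paper: the same split
\[
\bigl\|\widetilde\bfA(t_1) - \widehat\bfU_1\widehat\bfU_1^\top\widetilde\bfA(t_1)\widehat\bfV_1\widehat\bfV_1^\top\bigr\|
\le \bigl\|(\bfI-\widehat\bfU_1\widehat\bfU_1^\top)\widetilde\bfA(t_1)\bigr\| + \bigl\|\widetilde\bfA(t_1)(\bfI-\widehat\bfV_1\widehat\bfV_1^\top)\bigr\|,
\]
the same use of $(\bfI-\widehat\bfU_1\widehat\bfU_1^\top)\bfK(t_1)=0$, and the same Gronwall-type comparison of $\bfK(t)$ with $\widetilde\bfA(t)\widehat\bfV_0$. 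Those parts are fine and indeed give the $h^3$ and $h^2\varepsilon_r$ contributions.

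There is, however, a genuine gap in how you obtain the $h\varepsilon_{2r}$ term instead of $h\varepsilon_r$. Following your route, after subtracting $\bfK(t_1)\widehat\bfV_0^\top$ you are left with the piece
$(\bfI-\widehat\bfU_1\widehat\bfU_1^\top)\widetilde\bfA(t_1)(\bfI-\widehat\bfV_0\widehat\bfV_0^\top)
= \int_{t_0}^{t_1}(\bfI-\widehat\bfU_1\widehat\bfU_1^\top)\bfF(s,\widetilde\bfA(s))(\bfI-\widehat\bfV_0\widehat\bfV_0^\top)\,ds$.
At $s=t_0$ the integrand equals $(\bfI-\widehat\bfU_1\widehat\bfU_1^\top)\bfR(t_0,\bfY_0)(\bfI-\widehat\bfV_0\widehat\bfV_0^\top)$, because the tangential part $\bfM(t_0,\bfY_0)(\bfI-\widehat\bfV_0\widehat\bfV_0^\top)$ vanishes; but $\bfY_0$ has rank $r$, so Assumption~\ref{as:assumptions}(3) only gives $\|\bfR(t_0,\bfY_0)\|\le\varepsilon_r$. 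Your argument therefore produces $h\varepsilon_r + h^2\varepsilon_r + h^3$, which is exactly the Variant~1 bound, not the Variant~2 bound. The sentence ``$\bfK(t_1)\widehat\bfV_0^\top$ is a rank-$2r$ object, so the residual normal component of $\bfF$ evaluated along it is $\varepsilon_{2r}$-small'' is not connected to any place in your argument where $\bfF$ is actually evaluated at that (or any other) rank-$2r$ point.

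The missing idea, which the paper supplies, is to replace the Taylor expansion at $t_0$ by the midpoint representation
$\widetilde\bfA(t_1)=\bfY_0 + h\,\bfF\bigl(t_{\sfrac12},\widetilde\bfA(t_{\sfrac12})\bigr) + O(h^3)$
and then to substitute for $\widetilde\bfA(t_{\sfrac12})$ the \emph{rank-$2r$} half-step approximation
$\widehat\bfY_\star = \widehat\bfU_0\widehat\bfU_0^\top\bigl(\bfY_0+\tfrac{h}{2}\bfF(t_0,\bfY_0)\bigr)\widehat\bfV_0\widehat\bfV_0^\top$,
which is $O(h^2+h\varepsilon_r)$-close to $\widetilde\bfA(t_{\sfrac12})$ by the first-order augmented BUG bound. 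Now the tangent--normal splitting is taken at $\widehat\bfY_\star\in\mathcal{M}_{2r}$, so $\|\bfR(t_{\sfrac12},\widehat\bfY_\star)\|\le\varepsilon_{2r}$, and the tangential part $\bfM(t_{\sfrac12},\widehat\bfY_\star)$ has its left range inside $\mathrm{span}(\widehat\bfU_0,\bfF(t_{\sfrac12},\widehat\bfY_\star)\widehat\bfV_0)$; the latter piece is then matched against the midpoint expansion of $\bfK(t_1)$ to pick up only an additional $h^2\varepsilon_r$. Without inserting a genuine rank-$2r$ evaluation point somewhere, the $\varepsilon_{2r}$ improvement cannot be extracted from Assumption~\ref{as:assumptions}(3).
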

\begin{proof}
First, note that
\begin{align*}
\Vert \widetilde \bfA(t_1) - \widehat \bfU_1\widehat{\bfU}_1^{\top}\widetilde \bfA(t_1)\widehat \bfV_1\widehat{\bfV}_1^{\top} \Vert \leq\,& \Vert \widetilde \bfA(t_1) - \widehat \bfU_1\widehat{\bfU}_1^{\top}\widetilde \bfA(t_1) \Vert\\
\,&+ \Vert \widehat \bfU_1\widehat{\bfU}_1^{\top}\widetilde \bfA(t_1) - \widehat \bfU_1\widehat{\bfU}_1^{\top}\widetilde \bfA(t_1)\widehat \bfV_1\widehat{\bfV}_1^{\top} \Vert\\
\leq\,& \Vert (\bfI - \widehat \bfU_1\widehat{\bfU}_1^{\top})\widetilde \bfA(t_1) \Vert + \Vert \widetilde \bfA(t_1)(\bfI - \widehat \bfV_1\widehat{\bfV}_1^{\top}) \Vert\, .
\end{align*}
We start by bounding the first term on the right-hand side. For this, let us denote the rank $r$ approximation obtained with the fixed-rank BUG integrator at time $t_{\sfrac12}:= t_0 + \frac12 h$ with a forward Euler time discretization in each substep by $\wh\bfY_{\star}$ with
\begin{subequations}
    \begin{align}
        \wh\bfY_{\star} =\,& \wh\bfU_{0}\wh\bfU_{0}^{\top}\left(\bfY_0 + \frac12h \bfF(t_0,\bfY_0)\right)\wh\bfV_{0}\wh\bfV_{0}^{\top} \,.\label{eq:Ystar}
    \end{align}
\end{subequations}
Note that $\wh\bfU_0$ indeed corresponds to the basis obtained with the augmented BUG integrator using forward Euler time steps since 
\begin{align*}
    \text{range}(\wh\bfU_0) = \text{range}([\bfU_0, \bfU_0\bfS_0 + \frac{h}{2} \bfF(t_0, \bfY_0)\bfV_0])= \text{range}([\bfU_0, \bfF(t_0, \bfY_0)\bfV_0])\,.
\end{align*}
The same holds for the basis of the co-range, $\wh\bfV_0$. From the first-order robust error bound of the augmented BUG integrator, we know that $\Vert \wh\bfY_{\star} - \widetilde{\bfA}(t_{\sfrac12})\Vert \lesssim h^2 + h\varepsilon_r$. According to the midpoint rule, we then have, similar to \cite[Theorem~1]{ceruti2024robust},
\begin{align*}
\Vert (\bfI - \widehat \bfU_1\widehat{\bfU}_1^{\top})\widetilde \bfA(t_1) \Vert \leq\,& \left\Vert (\bfI-\widehat \bfU_1\widehat{\bfU}_1^{\top})\left( \widetilde \bfA(t_0) + h\bfF(t_{\sfrac12},\widetilde\bfA(t_{\sfrac12})) \right) \right\Vert + c_1 h^3\\
\leq\,& \left\Vert (\bfI-\widehat \bfU_1\widehat{\bfU}_1^{\top})\left( \bfY_0 + h\bfF(t_{\sfrac12},\wh\bfY_{\star}) \right) \right\Vert + c_2 h^3 + c_3 h^2\varepsilon_r \,.
\end{align*}
Note that the first term on the right side is $(\bfI-\widehat \bfU_1\widehat{\bfU}_1^{\top})\bfY_0 = 0$. To bound the second term, denote the projector onto the nullspace of $\widehat \bfU_1$ as $\bfP_{\widehat \bfU_1}^{\perp}:=(\bfI-\widehat \bfU_1\widehat{\bfU}_1^{\top})$. Then, we have with $\bfP_{\widehat \bfU_1}^{\perp}\wh\bfU_{0} = 0$ that
\begin{align*}
h\left\Vert \bfP_{\widehat \bfU_1}^{\perp}\bfF(t_{\sfrac12},\wh\bfY_{\star}) \right\Vert \leq h  \left\Vert \bfP_{\widehat \bfU_1}^{\perp}\bfM(t_{\sfrac12},\wh\bfY_{\star}) \right\Vert + h \varepsilon_{2r}= h\left\Vert \bfP_{\widehat \bfU_1}^{\perp}\bfF(t_{\sfrac12},\wh\bfY_{\star})\wh\bfV_{0}\wh\bfV_{0}^{\top} \right\Vert +h\varepsilon_{2r}\,.
\end{align*}
By the midpoint rule, defining $\bfY_{\sfrac12}:= \bfY_0 +\frac12h\bfF(t_0,\bfY_0)\widehat{\bfV}_0\widehat{\bfV}_0^{\top}$ gives
\begin{align*}
    \bfK(t_1) = \bfK(t_0) + h \bfF(t_{\sfrac12},\bfY_{\sfrac12})\widehat{\bfV}_0^{\top} + O(h^3)\,.
\end{align*}
Therefore, since by the construction of the augmented basis in the parallel integrator presented in Section~\ref{sec:v2} the term $\bfK(t_1)$ is spanned by $\widehat \bfU_1$,
\begin{align*}
h\left\Vert \bfP_{\widehat \bfU_1}^{\perp}\bfF(t_{\sfrac12},\wh\bfY_{\star})\wh\bfV_{0}\wh\bfV_{0}^{\top} \right\Vert = \,& h\left\Vert \bfP_{\widehat \bfU_1}^{\perp}\left(\bfF(t_{\sfrac12},\wh\bfY_{\star}) - \frac{1}{h}\bfK(t_1)\widehat \bfV_0^{\top}\right)\wh\bfV_{0}\right\Vert\, \\
\leq \,& h\left\Vert \bfP_{\widehat \bfU_1}^{\perp}\left(\bfF(t_{\sfrac12},\wh\bfY_{\star}) - \frac{1}{h}\bfK(t_0)\wh{\bfV}_0^{\top} -\bfF\left(t_{\sfrac12},\bfY_{\sfrac12}\right)\widehat \bfV_0\widehat{\bfV}_0^{\top}\right)\wh\bfV_{0}\right\Vert + c_4h^3\,.
\end{align*}
With $\bfP_{\widehat \bfU_1}^{\perp}\bfK(t_0) = 0$, we have
\begin{align*}
h\left\Vert \bfP_{\widehat \bfU_1}^{\perp}\bfF(t_{\sfrac12},\wh\bfY_{\star})\wh\bfV_{0}\wh\bfV_{0}^{\top} \right\Vert
 \leq \,& h\left\Vert \bfF(t_{\sfrac12},\wh\bfY_{\star}) - \bfF\left(t_{\sfrac12},\bfY_{\sfrac12}\right)\right\Vert + c_4h^3\, \\
\leq \,& hL\left\Vert \wh\bfY_{\star} - \bfY_{\sfrac12}\right\Vert +c_4h^3\, \\
= \,& \frac{h^2L}{2}\left\Vert (\bfI - \wh\bfU_0\wh\bfU_0^{\top})\bfF(t_0, \bfY_0)\wh \bfV_0\wh\bfV_0^{\top} \right\Vert +c_4h^3\, \\
\leq \,& \frac12 h^2 L \varepsilon_r + c_4h^3\, .
\end{align*}
Hence, $\Vert (\bfI - \widehat \bfU_1\widehat{\bfU}_1^{\top})\widetilde \bfA(t_1) \Vert \lesssim h^2\varepsilon_r + h\varepsilon_{2r} + h^3$. The analogous derivation for $\Vert \widetilde \bfA(t_1)(\bfI - \widehat \bfV_1\widehat{\bfV}_1^{\top}) \Vert$ yields the desired bound.
\end{proof}
Next, the projected full-rank solution $\widetilde \bfA(t_1)$ using the basis generated with the parallel integrator described in Section~\ref{sec:v1} has a similar bound, however, with a stronger contribution from the normal component:
\begin{lemma}
Given the Assumptions~\ref{as:assumptions}, the full solution projected onto the basis computed with the parallel integrator of Section~\ref{sec:v1} fulfills
\begin{align*}
\Vert \widetilde \bfA(t_1) - \widehat \bfU_1\widehat{\bfU}_1^{\top}\widetilde \bfA(t_1)\widehat \bfV_1\widehat{\bfV}_1^{\top} \Vert \leq C_1 h^3 + C_2\varepsilon_{r} h\;
\end{align*}
where constants $C_i$ only depend on the Lipschitz constant and bound of $\bfF$, a bound of third derivatives of the exact solution, and an upper bound of the time stepsize.
\end{lemma}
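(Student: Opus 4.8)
Here is how I would approach the statement.

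\medskip

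The plan is to follow the structure of the proof of Lemma~\ref{le:v2Proj}, with one essential modification: instead of comparing the full--rank half--step solution to the rank--$2r$ point $\wh\bfY_\star$, I would compare it to a \emph{rank--$r$} point whose co--range is exactly the matrix $\bfV_\star$ used to form $\widehat\bfU_1$. This is precisely what turns the $\varepsilon_{2r}$ of Lemma~\ref{le:v2Proj} into $\varepsilon_r$. As before,
\begin{align*}
\Vert \widetilde \bfA(t_1) - \widehat \bfU_1\widehat{\bfU}_1^{\top}\widetilde \bfA(t_1)\widehat \bfV_1\widehat{\bfV}_1^{\top} \Vert \leq \Vert \bfP_{\widehat\bfU_1}^{\perp}\widetilde \bfA(t_1) \Vert + \Vert \widetilde \bfA(t_1)\bfP_{\widehat\bfV_1}^{\perp} \Vert ,
\end{align*}
with $\bfP_{\widehat\bfU_1}^{\perp} = \bfI - \widehat \bfU_1\widehat{\bfU}_1^{\top}$ and $\bfP_{\widehat\bfV_1}^{\perp} = \bfI - \widehat \bfV_1\widehat{\bfV}_1^{\top}$; I would bound the first summand, the second being symmetric, with $\bfU_\star$, the $L$--step and $\widehat\bfV_1 = \text{orth}([\widehat\bfV_0,\bfL(t_1)\widehat\bfU_0^\top\bfU_\star])$ playing the roles of $\bfV_\star$, the $K$--step and $\widehat\bfU_1$, after the further splitting
\begin{align*}
\Vert \bfP_{\widehat\bfU_1}^{\perp}\widetilde \bfA(t_1) \Vert \leq \Vert \bfP_{\widehat\bfU_1}^{\perp}\widetilde \bfA(t_1)\bfV_\star\bfV_\star^\top \Vert + \Vert \bfP_{\widehat\bfU_1}^{\perp}\widetilde \bfA(t_1)(\bfI - \bfV_\star\bfV_\star^\top) \Vert .
\end{align*}

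For the first piece I would exploit the accuracy of the $K$--step. Since $\bfK(t_0)\widehat\bfV_0^\top = \bfY_0 = \widetilde\bfA(t_0)$ and $\frac{d}{dt}\big(\bfK(t)\widehat\bfV_0^\top\big) = \bfF(t,\bfK(t)\widehat\bfV_0^\top)\widehat\bfV_0\widehat\bfV_0^\top$, a Gr\"onwall argument based on $\Vert\bfF(t,\bfY_0)(\bfI-\widehat\bfV_0\widehat\bfV_0^\top)\Vert = \Vert\bfR(t,\bfY_0)(\bfI-\widehat\bfV_0\widehat\bfV_0^\top)\Vert \leq \varepsilon_r$ (this is where $\widehat\bfV_0 = \text{orth}([\bfV_0,\bfF(t_0,\bfY_0)^\top\bfU_0])$ enters) shows $\Vert\bfK(t_1)\widehat\bfV_0^\top - \widetilde\bfA(t_1)\widehat\bfV_0\widehat\bfV_0^\top\Vert \lesssim h^3 + h^2\varepsilon_r$. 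As $\widehat\bfV_0\widehat\bfV_0^\top\bfV_\star = \bfV_\star$ this gives $\widetilde\bfA(t_1)\bfV_\star = \bfK(t_1)\widehat\bfV_0^\top\bfV_\star + \bigo(h^3 + h^2\varepsilon_r)$, and since by construction $\text{range}\big(\bfK(t_1)\widehat\bfV_0^\top\bfV_\star\big)\subseteq\text{range}(\widehat\bfU_1)$, i.e.\ $\bfP_{\widehat\bfU_1}^{\perp}\bfK(t_1)\widehat\bfV_0^\top\bfV_\star = 0$, the first piece is $\Vert\bfP_{\widehat\bfU_1}^{\perp}\widetilde\bfA(t_1)\bfV_\star\Vert \lesssim h^3 + h^2\varepsilon_r$.

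For the second piece I would proceed as in Lemma~\ref{le:v2Proj}: by the midpoint rule $\widetilde\bfA(t_1) = \bfY_0 + h\bfF(t_{\sfrac12},\widetilde\bfA(t_{\sfrac12})) + \bigo(h^3)$, and $\bfP_{\widehat\bfU_1}^{\perp}\bfY_0 = 0$ because $\text{range}(\bfU_0)\subseteq\text{range}(\widehat\bfU_0)\subseteq\text{range}(\widehat\bfU_1)$, so the $\bfY_0$--contribution drops out and it remains to bound $h\Vert\bfP_{\widehat\bfU_1}^{\perp}\bfF(t_{\sfrac12},\widetilde\bfA(t_{\sfrac12}))(\bfI-\bfV_\star\bfV_\star^\top)\Vert$. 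Here I would introduce $\widehat\bfY_\star^{r} := \bfU_\star\bfU_\star^\top\big(\bfY_0 + \frac{h}{2}\bfF(t_0,\bfY_0)\big)\bfV_\star\bfV_\star^\top$, the rank--$r$ approximation at $t_{\sfrac12}$ produced by the fixed--rank BUG integrator with a forward Euler discretization in each substep; recall that $\bfU_\star$ and $\bfV_\star$ are exactly its basis matrices, that $\text{range}(\bfU_\star)\subseteq\text{range}(\widehat\bfU_0)\subseteq\text{range}(\widehat\bfU_1)$ and $\text{range}\big((\widehat\bfY_\star^{r})^\top\big)\subseteq\text{range}(\bfV_\star)$, and that the first--order robust error bound of the fixed--rank BUG integrator gives $\Vert\widehat\bfY_\star^{r} - \widetilde\bfA(t_{\sfrac12})\Vert \lesssim h^2 + h\varepsilon_r$. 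Replacing $\widetilde\bfA(t_{\sfrac12})$ by $\widehat\bfY_\star^{r}$ (Lipschitz continuity of $\bfF$) and splitting $\bfF(t_{\sfrac12},\widehat\bfY_\star^{r}) = \bfM(t_{\sfrac12},\widehat\bfY_\star^{r}) + \bfR(t_{\sfrac12},\widehat\bfY_\star^{r})$ with $\Vert\bfR(t_{\sfrac12},\widehat\bfY_\star^{r})\Vert \leq \varepsilon_r$ (it is essential that the point is rank $r$, not $2r$), the tangential part $\bfM(t_{\sfrac12},\widehat\bfY_\star^{r})$ is annihilated after multiplying by $\bfP_{\widehat\bfU_1}^{\perp}$ on the left and $(\bfI-\bfV_\star\bfV_\star^\top)$ on the right: its range--part is killed by $\bfP_{\widehat\bfU_1}^{\perp}$ (range in $\text{range}(\bfU_\star)$) and its co--range--part by $(\bfI-\bfV_\star\bfV_\star^\top)$ (co--range in $\text{range}(\bfV_\star)$). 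Hence $\Vert\bfP_{\widehat\bfU_1}^{\perp}\bfF(t_{\sfrac12},\widehat\bfY_\star^{r})(\bfI-\bfV_\star\bfV_\star^\top)\Vert \leq \varepsilon_r$, and the second piece is $\lesssim h\varepsilon_r + h^3$. Adding the two pieces yields $\Vert\bfP_{\widehat\bfU_1}^{\perp}\widetilde\bfA(t_1)\Vert \lesssim h^3 + h\varepsilon_r$, and the symmetric estimate for $\Vert\widetilde\bfA(t_1)\bfP_{\widehat\bfV_1}^{\perp}\Vert$ finishes the proof.

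The main obstacle is ensuring that everything remains free of the curvature $\rho^{-1}$. The delicate point is that $\bfV_\star$ and $\bfU_\star$ implicitly contain $\bfS_0^{-1}$, so they must never be Taylor--expanded; the argument has to be arranged so that $\bfV_\star$ only appears either inside a quantity ($\bfK(t_1)\widehat\bfV_0^\top\bfV_\star$) that lies exactly in $\text{range}(\widehat\bfU_1)$ by construction, or contracted against $\bfP_{\widehat\bfU_1}^{\perp}$ so that the term $\bfY_0 = \bfU_0\bfS_0\bfV_0^\top$ (the only place $\bfS_0$ occurs) is annihilated, or as a normal component controlled by $\varepsilon_r$. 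A secondary technical point is supplying the two auxiliary first--order estimates at the correct orders — the $\bigo(h^3+h^2\varepsilon_r)$ accuracy of $\bfK(t_1)\widehat\bfV_0^\top$ and the $\bigo(h^2+h\varepsilon_r)$ accuracy of $\widehat\bfY_\star^{r}$ — with constants depending only on $L$, $B$ and a bound on the third derivatives of $\widetilde\bfA$.
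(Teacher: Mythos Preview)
Your argument is correct and uses the same essential ingredients as the paper --- the rank--$r$ half--step approximant with range/co--range $\bfU_\star,\bfV_\star$, the tangent/normal split at a rank--$r$ point, and the fact that $\bfK(t_1)\widehat\bfV_0^\top\bfV_\star$ lies in $\mathrm{range}(\widehat\bfU_1)$ --- but it is organized differently. The paper does not perform your preliminary splitting $\widetilde\bfA(t_1)=\widetilde\bfA(t_1)\bfV_\star\bfV_\star^\top+\widetilde\bfA(t_1)(\bfI-\bfV_\star\bfV_\star^\top)$; instead it applies the midpoint rule directly to $\bfP_{\widehat\bfU_1}^\perp\widetilde\bfA(t_1)$, replaces $\widetilde\bfA(t_{\sfrac12})$ by $\bfY_\star$ (their fixed--rank BUG point, a close relative of your $\widehat\bfY_\star^r$), and only then uses the $\bfM/\bfR$ split to isolate $\bfF(t_{\sfrac12},\bfY_\star)\bfV_\star\bfV_\star^\top$. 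For that term the paper subtracts $\frac1h\bfK(t_1)\widehat\bfV_0^\top$ and applies the midpoint rule a \emph{second} time to $\bfK(t_1)$, reducing everything to a Lipschitz comparison $\|\bfY_\star-\bfY_{\sfrac12}\|$ between two half--step quantities, which is finally triangled through $\widetilde\bfA(t_{\sfrac12})$. Your route replaces this ``midpoint--for--$\bfK$'' step by a direct Gr\"onwall estimate $\|\widetilde\bfA(t_1)\widehat\bfV_0-\bfK(t_1)\|\lesssim h^3+h^2\varepsilon_r$; this is cleaner for the $\bfV_\star$--component but requires the auxiliary ODE bound, while the paper's version keeps the whole argument uniformly at the level of midpoint comparisons. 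One small correction: your displayed identity $\|\bfF(t,\bfY_0)(\bfI-\widehat\bfV_0\widehat\bfV_0^\top)\|=\|\bfR(t,\bfY_0)(\bfI-\widehat\bfV_0\widehat\bfV_0^\top)\|$ holds only at $t=t_0$ (since $\widehat\bfV_0$ is built from $\bfF(t_0,\bfY_0)^\top\bfU_0$); for $t\in(t_0,t_1]$ one picks up an additional $O(t-t_0)$ term from the regularity of $\bfF$ along the solution, which is exactly what is needed for the Gr\"onwall bound to give $h^3+h^2\varepsilon_r$.
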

\begin{proof}
The proof is similar to that of Lemma~\ref{le:v2Proj}. Here, instead of $\wh\bfY_{\star}$, we use the fixed-rank BUG approximation at the half-step
    \begin{subequations}
    \begin{align}
        \bfU_{\star} =\,& \text{orth}(\bfK_0 + \frac12 h \bfF(t_0,\bfY_0)\bfV_0)\\
        \bfV_{\star} =\,& \text{orth}(\bfL_0 + \frac12 h \bfF(t_0,\bfY_0)^{\top}\bfU_0)\\
        \bfY_{\star} =\,& \bfU_{\star}\bfU_{\star}^{\top}\left(\bfY_0 + \frac12h \bfF(t_0,\bfU_{\star}\bfU_{\star}^{\top}\bfY_0\bfV_{\star}\bfV_{\star}^{\top})\right)\bfV_{\star}\bfV_{\star}^{\top} \,.
    \end{align}
\end{subequations}
Then, the same derivation as before with $\wh\bfY_{\star}$ replaced by $\bfY_{\star}$ yields
\begin{align*}
h\left\Vert \bfP_{\widehat \bfU_1}^{\perp}\bfF(t_{\sfrac12},\bfY_{\star}) \right\Vert \leq h  \left\Vert \bfP_{\widehat \bfU_1}^{\perp}\bfM(t_{\sfrac12},\bfY_{\star}) \right\Vert + h \varepsilon_r= h\left\Vert \bfP_{\widehat \bfU_1}^{\perp}\bfF(t_{\sfrac12},\bfY_{\star})\bfV_{\star}\bfV_{\star}^{\top} \right\Vert +h\varepsilon_r\,.
\end{align*}
Therefore, since by the construction of the augmented basis in \emph{both} variants of the second--order parallel integrator, the term $\bfK(t_1)\widehat \bfV_0^{\top}\bfV_{\star}$ is spanned by $\widehat \bfU_1$,
\begin{align*}
h\left\Vert \bfP_{\widehat \bfU_1}^{\perp}\bfF(t_{\sfrac12},\bfY_{\star})\bfV_{\star}\bfV_{\star}^{\top} \right\Vert = \,& h\left\Vert \bfP_{\widehat \bfU_1}^{\perp}\left(\bfF(t_{\sfrac12},\bfY_{\star}) - \frac{1}{h}\bfK(t_1)\widehat \bfV_0^{\top}\right)\bfV_{\star}\right\Vert\, \\
\leq \,& h\left\Vert \bfP_{\widehat \bfU_1}^{\perp}\left(\bfF(t_{\sfrac12},\bfY_{\star}) - \frac{1}{h}\bfK(t_0)\widehat{\bfV}_0^{\top} -\bfF\left(t_{\sfrac12},\bfY_{\sfrac12}\right)\widehat \bfV_0\widehat{\bfV}_0^{\top}\right)\bfV_{\star}\right\Vert + c_4h^3\,.
\end{align*}
Since $\bfP_{\widehat \bfU_1}^{\perp}\bfK(t_0) = 0$ and $\wh\bfV_0\wh\bfV_0^{\top}\bfV_{\star} = \bfV_{\star}$ we have
\begin{align*}
h\left\Vert \bfP_{\widehat \bfU_1}^{\perp}\bfF(t_{\sfrac12},\bfY_{\star})\bfV_{\star}\bfV_{\star}^{\top} \right\Vert
 \leq \,& h\left\Vert \bfF(t_{\sfrac12},\bfY_{\star}) - \bfF\left(t_{\sfrac12},\bfY_{\sfrac12}\right)\right\Vert + c_4h^3\, \\
\leq \,& hL\left\Vert \bfY_{\star} - \bfY_{\sfrac12}\right\Vert +c_4h^3\, \\
\leq \,& hL\left\Vert \bfY_{\star} - \widetilde\bfA(t_{\sfrac12})\right\Vert + hL\left\Vert \widetilde\bfA(t_{\sfrac12}) - \bfY_{\sfrac12}\right\Vert +c_4h^3\, \\
\leq \,& 2h^2 L \varepsilon_r + c_5h^3\, .
\end{align*}
\end{proof}
Next, the second term in the local error \eqref{eq:boundGlobal} has the same robust error bound for both integrators of Sections~\ref{sec:v1} and \ref{sec:v2}:
\begin{lemma}
Given the Assumptions~\ref{as:assumptions}, the distance between the projected full dynamics and the solution of both parallel integrators of Sections~\ref{sec:v1} and \ref{sec:v2} is bounded by
\begin{align*}
\Vert \widehat \bfU_1\widehat{\bfU}_1^{\top}\widetilde \bfA(t_1)\widehat \bfV_1\widehat{\bfV}_1^{\top} -\wh\bfY_1 \Vert \leq \wh C_1 h\varepsilon_{2r} + \wh C_2 h^2\varepsilon_r + \wh C_3 h^3\, ,
\end{align*}
where $\wh C_i$ only depend on the Lipschitz constant and bound of $\bfF$, a bound of third derivatives of the exact solution, and an upper bound of the time stepsize.
\end{lemma}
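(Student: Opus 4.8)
The plan is to use that $\widehat\bfU_1$ and $\widehat\bfV_1$ have orthonormal columns, so that
$$\Vert \widehat\bfU_1\widehat\bfU_1^{\top}\widetilde\bfA(t_1)\widehat\bfV_1\widehat\bfV_1^{\top} - \wh\bfY_1 \Vert = \Vert \widehat\bfU_1^{\top}\widetilde\bfA(t_1)\widehat\bfV_1 - \widehat\bfS_1 \Vert,$$
and then to estimate the right-hand side blockwise along the splitting $\widehat\bfU_1 = [\widehat\bfU_0,\widetilde\bfU_2]$, $\widehat\bfV_1 = [\widehat\bfV_0,\widetilde\bfV_2]$ common to both variants. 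This reduces the claim to comparing $\widehat\bfU_0^{\top}\widetilde\bfA(t_1)\widehat\bfV_0$ with $\bar\bfS(t_1)$, $\widehat\bfU_0^{\top}\widetilde\bfA(t_1)\widetilde\bfV_2$ with $\bfL(t_1)^{\top}\widetilde\bfV_2$, $\widetilde\bfU_2^{\top}\widetilde\bfA(t_1)\widehat\bfV_0$ with $\widetilde\bfU_2^{\top}\bfK(t_1)$, and $\widetilde\bfU_2^{\top}\widetilde\bfA(t_1)\widetilde\bfV_2$ with $\mathbf{0}$. Throughout I will use that $\bfY_0 = \bfU_0\bfS_0\bfV_0^{\top}$ lies in $\text{range}(\widehat\bfU_0)$ and has co-range in $\text{range}(\widehat\bfV_0)$, while $\widetilde\bfU_2\perp\widehat\bfU_0$ and $\widetilde\bfV_2\perp\widehat\bfV_0$ by construction; in particular $\widetilde\bfU_2^{\top}\bfY_0 = \mathbf 0$ and $\bfY_0\widetilde\bfV_2 = \mathbf 0$.

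For the first three blocks I would compare the exact flow with the corresponding fixed-subspace (Galerkin-type) flow. With $\bfB(t):=\widehat\bfU_0\bar\bfS(t)\widehat\bfV_0^{\top}$ one has $\dot\bfB(t)=\widehat\bfU_0\widehat\bfU_0^{\top}\bfF(t,\bfB(t))\widehat\bfV_0\widehat\bfV_0^{\top}$ with $\bfB(t_0)=\bfY_0$, so subtracting integral forms and splitting $\bfF(t,\widetilde\bfA(t))$ into its $\widehat\bfU_0\widehat\bfU_0^{\top}(\cdot)\widehat\bfV_0\widehat\bfV_0^{\top}$ part and its remainder, the crucial estimate is
$$\Vert(\bfI-\widehat\bfU_0\widehat\bfU_0^{\top})\bfF(t,\widetilde\bfA(t))\Vert \le \varepsilon_r + c\,h,\qquad \Vert\bfF(t,\widetilde\bfA(t))(\bfI-\widehat\bfV_0\widehat\bfV_0^{\top})\Vert \le \varepsilon_r + c\,h,$$
for $t\in[t_0,t_1]$. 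The first holds because $\widehat\bfU_0=\text{orth}([\bfU_0,\bfF(t_0,\bfY_0)\bfV_0])$ annihilates both parts $\bfU_0\bfU_0^{\top}\bfF(t_0,\bfY_0)(\bfI-\bfV_0\bfV_0^{\top})$ and $\bfF(t_0,\bfY_0)\bfV_0\bfV_0^{\top}$ of the tangential component of $\bfF(t_0,\bfY_0)$, leaving only $(\bfI-\widehat\bfU_0\widehat\bfU_0^{\top})\bfR(t_0,\bfY_0)$ (of norm $\le\varepsilon_r$) at $t=t_0$, while $\Vert\bfF(t,\widetilde\bfA(t))-\bfF(t_0,\bfY_0)\Vert = \Vert\dot{\widetilde\bfA}(t)-\dot{\widetilde\bfA}(t_0)\Vert\le c\,h$ by the bound on the second derivative of the exact solution; the second is the transpose argument. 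A Grönwall estimate over an interval of length $h$ then yields $\Vert\widetilde\bfA(t)-\bfB(t)\Vert\lesssim h\varepsilon_r+h^2$, and one further integration against the Lipschitz right-hand side gives $\Vert\widehat\bfU_0^{\top}\widetilde\bfA(t_1)\widehat\bfV_0-\bar\bfS(t_1)\Vert\lesssim h^2\varepsilon_r+h^3$. The same mechanism, with $\bfK(t)\widehat\bfV_0^{\top}$ resp.\ $\widehat\bfU_0\bfL(t)^{\top}$ in the role of the one-sided projected flow (note $\bfK(t_0)\widehat\bfV_0^{\top}=\bfY_0=\widehat\bfU_0\bfL(t_0)^{\top}$), bounds $\Vert\widetilde\bfA(t_1)\widehat\bfV_0-\bfK(t_1)\Vert$ and $\Vert\widehat\bfU_0^{\top}\widetilde\bfA(t_1)-\bfL(t_1)^{\top}\Vert$ by $h^2\varepsilon_r+h^3$; left/right multiplication by the orthonormal $\widetilde\bfU_2^{\top}$ resp.\ $\widetilde\bfV_2$ then controls the corresponding two off-diagonal blocks.

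The $(2,2)$ block carries the normal-component improvement and is handled without expanding $\widetilde\bfU_2$ or $\widetilde\bfV_2$. Using $\widetilde\bfU_2^{\top}\bfY_0=\mathbf 0$, the midpoint expansion $\widetilde\bfA(t_1)=\bfY_0+h\bfF(t_{\sfrac12},\widetilde\bfA(t_{\sfrac12}))+O(h^3)$, and $\Vert\widetilde\bfA(t_{\sfrac12})-\wh\bfY_\star\Vert\lesssim h^2+h\varepsilon_r$ for the rank-$\le 2r$ half-step surrogate $\wh\bfY_\star=\widehat\bfU_0\widehat\bfU_0^{\top}(\bfY_0+\sfrac12 h\,\bfF(t_0,\bfY_0))\widehat\bfV_0\widehat\bfV_0^{\top}$ (the augmented-BUG half step, which is common to both variants, as already used in the preceding lemmas), one obtains
$$\widetilde\bfU_2^{\top}\widetilde\bfA(t_1)\widetilde\bfV_2 = h\,\widetilde\bfU_2^{\top}\bfF(t_{\sfrac12},\wh\bfY_\star)\widetilde\bfV_2 + O(h^3+h^2\varepsilon_r).$$
Since $\wh\bfY_\star$ has range in $\text{range}(\widehat\bfU_0)$ and co-range in $\text{range}(\widehat\bfV_0)$, its tangential field $\bfM(t_{\sfrac12},\wh\bfY_\star)=\bfP(\wh\bfY_\star)\bfF(t_{\sfrac12},\wh\bfY_\star)$ is of the form $\widehat\bfU_0(\cdots)+(\cdots)\widehat\bfV_0^{\top}$ and is therefore annihilated on the left by $\widetilde\bfU_2^{\top}$ and on the right by $\widetilde\bfV_2$; only $h\,\widetilde\bfU_2^{\top}\bfR(t_{\sfrac12},\wh\bfY_\star)\widetilde\bfV_2$ survives, of norm at most $h\varepsilon_{2r}$. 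Collecting the four blockwise estimates gives $\Vert\widehat\bfU_1^{\top}\widetilde\bfA(t_1)\widehat\bfV_1-\widehat\bfS_1\Vert\lesssim h\varepsilon_{2r}+h^2\varepsilon_r+h^3$, with constants depending only on the Lipschitz constant and bound of $\bfF$, a bound on third derivatives of the exact solution, and an upper bound for $h$, as claimed.

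The step I expect to be the main obstacle is the $(2,2)$ block: two extra orders of accuracy must be extracted from the fact that $\widetilde\bfU_2$ and $\widetilde\bfV_2$ are both newly generated directions, and the only way to do this without Taylor-expanding the curvature-dependent bases is to pass through the rank-$\le 2r$ midpoint surrogate $\wh\bfY_\star$ and use the exact orthogonalities $\widetilde\bfU_2\perp\widehat\bfU_0$, $\widetilde\bfV_2\perp\widehat\bfV_0$ to cancel the tangential field exactly, leaving only the genuinely small $\varepsilon_{2r}$ remainder. A secondary point of care is that this argument must use the rank-$\le 2r$ surrogate in both variants (not the rank-$r$ surrogate used for the projection estimate of Variant~1), so that the stated $h\varepsilon_{2r}$ bound holds uniformly.
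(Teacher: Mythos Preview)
Your proposal is correct and follows essentially the same route as the paper: the same block decomposition along $\widehat\bfU_1=[\widehat\bfU_0,\widetilde\bfU_2]$, $\widehat\bfV_1=[\widehat\bfV_0,\widetilde\bfV_2]$, the same Lipschitz/Gr\"onwall-type comparison of $\widetilde\bfA$ with the $S$-, $K$-, $L$-flows exploiting $(\bfI-\widehat\bfU_0\widehat\bfU_0^\top)\bfM(t_0,\bfY_0)=0$, and the same midpoint argument via the rank-$\le 2r$ surrogate $\wh\bfY_\star$ together with $\widetilde\bfU_2^\top\bfM(t_{\sfrac12},\wh\bfY_\star)\widetilde\bfV_2=0$ for the $(2,2)$ block. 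Your explicit remark that the $(2,2)$ estimate must use $\wh\bfY_\star$ (not the rank-$r$ half-step $\bfY_\star$) in both variants to secure the $h\varepsilon_{2r}$ constant is exactly the point the paper relies on as well.
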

\begin{proof}
By the definition of the augmentation step, we have
\begin{align*}
    \Vert \widehat \bfU_1\widehat{\bfU}_1^{\top}\widetilde \bfA(t_1)\widehat \bfV_1\widehat{\bfV}_1^{\top} -\wh\bfY_1 \Vert \leq\,& \Vert \widehat{\bfU}_0^{\top}\widetilde \bfA(t_1)\widehat \bfV_0 - \bar \bfS(t_1) \Vert + \Vert \widetilde{\bfU}_2^{\top}\widetilde \bfA(t_1)\widehat \bfV_0 - \widetilde{\bfU}_2^{\top}\bfK(t_1) \Vert \\
    \,&+ \Vert \widehat \bfU_0^{\top}\widetilde \bfA(t_1)\widetilde{\bfV}_2 - \bfL(t_1)^{\top}\widetilde{\bfV}_2 \Vert + \Vert \widetilde{\bfU}_2^{\top}\widetilde \bfA(t_1)\widetilde{\bfV}_2\Vert\,.
\end{align*}
First, the expansion coefficients with respect to $\widehat{\bfU}_0$ and $\widehat \bfV_0$ are bounded according to
\begin{align*}
\Vert \widehat{\bfU}_0^{\top}\widetilde \bfA(t_1)\widehat \bfV_0 - \bar \bfS(t_1) \Vert \leq\, & \int_{t_0}^{t_1} \Vert \widehat{\bfU}_0^{\top}\bfF( t,\widetilde \bfA(t) )\widehat \bfV_0 -\widehat{\bfU}_0^{\top}\bfF( t,\widehat \bfU_0\bar{\bfS}(t)\widehat \bfV_0^{\top} )\widehat \bfV_0  \Vert \, dt \\
\leq \,& L\int \Vert \widetilde \bfA(t)  - \widehat \bfU_0\bar{\bfS}(t)\widehat \bfV_0^{\top}  \Vert \, dt \\
\leq \,& L\int\int \Vert\bfF(\xi,\widetilde \bfA(\xi))  - \widehat \bfU_0\widehat \bfU_0^{\top}\bfF(\xi,\widehat \bfU_0\bar{\bfS}(\xi)\widehat \bfV_0^{\top})\widehat \bfV_0\widehat \bfV_0^{\top}  \Vert \, dtd\xi\\
\leq \,& Lh^2 \Vert\bfF(t_0,\bfY_0)  - \widehat \bfU_0\widehat \bfU_0^{\top}\bfF(t_0,\bfY_0)\widehat \bfV_0\widehat \bfV_0^{\top}  \Vert + \wh c_1 h^3\,.
\end{align*}
The norm on the right-hand side can be bounded by
\begin{align*}
\,& \Vert\bfF(t_0,\bfY_0)  - \widehat \bfU_0\widehat \bfU_0^{\top}\bfF(t_0,\bfY_0)\widehat \bfV_0\widehat \bfV_0^{\top}  \Vert\\
=\,& \Vert\bfF(t_0,\bfY_0)- \widehat \bfU_0\widehat \bfU_0^{\top}\bfF(t_0,\bfY_0) + \widehat \bfU_0\widehat \bfU_0^{\top}\left(\bfF(t_0,\bfY_0)  -\bfF(t_0,\bfY_0)\widehat \bfV_0\widehat \bfV_0^{\top}\right)  \Vert \\
\leq \,& \Vert (\bfI- \widehat \bfU_0\widehat \bfU_0^{\top})\bfF(t_0,\bfY_0)\Vert + \Vert\bfF(t_0,\bfY_0) (\bfI-\widehat \bfV_0\widehat \bfV_0^{\top})  \Vert \\
\leq \,& \Vert (\bfI- \widehat \bfU_0\widehat \bfU_0^{\top})\bfM(t_0,\bfY_0)\Vert + \Vert \bfM(t_0,\bfY_0) (\bfI-\widehat \bfV_0\widehat \bfV_0^{\top})  \Vert + 2\varepsilon_r = 2\varepsilon_r\,.
\end{align*}
Second, we bound expansion coefficients with respect to $\widetilde{\bfU}_2$ and $\widehat \bfV_0$
\begin{align*}
\Vert \widetilde{\bfU}_2^{\top}\widetilde \bfA(t_1)\widehat \bfV_0 - \widetilde{\bfU}_2^{\top}\bfK(t_1) \Vert \leq\, & \int \Vert \widetilde{\bfU}_2^{\top}\bfF(t,\widetilde \bfA(t))\widehat \bfV_0 -\widetilde{\bfU}_2^{\top}\bfF(t, \bfK(t)\widehat \bfV_0^{\top} )\widehat \bfV_0  \Vert \, dt \\
\leq \,& L\int \Vert \widetilde \bfA(t)  - \bfK(t)\widehat \bfV_0^{\top}  \Vert \, dt \\
\leq \,& L\int\int \Vert\bfF(\xi,\widetilde \bfA(\xi))  -\bfF(\xi,\bfK(\xi)\widehat \bfV_0^{\top})\widehat \bfV_0\widehat \bfV_0^{\top}  \Vert \, dtd\xi\\
\leq \,& L\int\int \Vert\bfF(t_0,\bfY_0)  -\bfF(t_0,\bfY_0)\widehat \bfV_0\widehat \bfV_0^{\top}  \Vert \, dtd\xi + \wh c_2h^3\\
\leq \,& Lh^2\varepsilon_r + \wh c_2h^3\,.
\end{align*}
Expansion coefficients with respect to $\widetilde{\bfV}_2$ and $\widehat \bfU_0$ can be bounded analogously, that is,
\begin{align*}
\Vert \widehat \bfU_0^{\top}\widetilde \bfA(t_1)\widetilde{\bfV}_2 - \bfL(t_1)^{\top}\widetilde{\bfV}_2 \Vert \leq Lh^2\varepsilon_r + \wh c_3 h^3 \, .
\end{align*}
Lastly, coefficients with respect to $\widetilde{\bfV}_2$ and $\widetilde{\bfU}_2$ are of order $\mathcal{O}(h\varepsilon_{2r} + h^2\varepsilon_{r} + h^3)$: The midpoint rule with $\wh\bfY_{\star}$ as defined in \eqref{eq:Ystar} gives
\begin{align*}
\Vert \widetilde{\bfU}_2^{\top}\widetilde \bfA(t_1)\widetilde{\bfV}_2\Vert \leq\, & \left\Vert \widetilde{\bfU}_2^{\top}\left[\bfY_0 + h\bfF(t_{\sfrac12},\wh\bfY_{\star}) \right]\widetilde{\bfV}_2\right\Vert + \wh c_4h^3 + h^2\varepsilon_{r}\\
\leq\, & h\left\Vert \widetilde{\bfU}_2^{\top}\bfM(t_{\sfrac12},\wh\bfY_{\star})\widetilde{\bfV}_2\right\Vert+ \wh c_4h^3 + h\varepsilon_{2r} + h^2\varepsilon_{r}\\
=\,& \wh c_4h^3 + h\varepsilon_{2r} + h^2\varepsilon_{r}\,,
\end{align*}
where we use that $\widetilde{\bfU}_2^{\top}\wh\bfU_{0} = \wh\bfV_{0}^{\top}\widetilde{\bfV}_2 = 0$.
\end{proof}
Hence, we can conclude that the local error has a robust error bound of order three:
\begin{theorem}
    Under our Assumptions~\ref{as:assumptions}, the local error of the parallel integrator of Section~\ref{sec:v1} fulfills
    \begin{align*}
        \Vert \widetilde\bfA(t_1) - \bfY_1 \Vert \leq \widetilde C_1 h\varepsilon_{r} + \widetilde C_2 h^3 + \vartheta\,,
    \end{align*}
    and the parallel integrator of Section~\ref{sec:v2} fulfills
    \begin{align*}
        \Vert \widetilde\bfA(t_1) - \bfY_1 \Vert \leq \widetilde C_1 h^2\varepsilon_{r} + \widetilde C_2 h\varepsilon_{2r} + \widetilde C_3 h^3 + \vartheta\,,
    \end{align*}
    where constants only depend on the Lipschitz constant and bound of $\bfF$, a bound of third derivatives of the exact solution, and an upper bound of the time stepsize.
\end{theorem}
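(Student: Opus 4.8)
The plan is to read both estimates off the triangle inequality \eqref{eq:boundGlobal}, which already splits the local error $\Vert\widetilde\bfA(t_1)-\bfY_1\Vert$ into a projection error $\Vert\widetilde\bfA(t_1)-\wh\bfU_1\wh\bfU_1^\top\widetilde\bfA(t_1)\wh\bfV_1\wh\bfV_1^\top\Vert$, a Galerkin defect $\Vert\wh\bfU_1\wh\bfU_1^\top\widetilde\bfA(t_1)\wh\bfV_1\wh\bfV_1^\top-\wh\bfY_1\Vert$, and a truncation error $\Vert\wh\bfY_1-\bfY_1\Vert$. For the integrator of Section~\ref{sec:v2} the projection error is bounded by Lemma~\ref{le:v2Proj} ($\lesssim h^3+h\varepsilon_{2r}+h^2\varepsilon_r$) and for the integrator of Section~\ref{sec:v1} by the lemma that follows it ($\lesssim h^3+h\varepsilon_r$); the Galerkin defect is bounded for both variants by the subsequent (Galerkin-defect) lemma ($\lesssim h\varepsilon_{2r}+h^2\varepsilon_r+h^3$). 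The only ingredient not yet recorded is therefore the truncation estimate.

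For that estimate I would argue directly from the Truncate step: there $\wh\bfY_1=\wh\bfU_1\wh\bfS_1\wh\bfV_1^\top$ and $\bfY_1=\wh\bfU_1\,\bfP_1\bfS_1\bfQ_1^\top\,\wh\bfV_1^\top$, and since $\wh\bfU_1$ and $\wh\bfV_1$ have orthonormal columns, $\Vert\wh\bfY_1-\bfY_1\Vert=\Vert\wh\bfS_1-\bfP_1\bfS_1\bfQ_1^\top\Vert$; because $\bfP_1\bfS_1\bfQ_1^\top$ keeps the $r_1$ largest singular values of the SVD $\wh\bfS_1=\wh\bfP\wh\bfSigma\wh\bfQ^\top$, the discarded remainder has Frobenius norm $\bigl(\sum_{j>r_1}\sigma_j^2\bigr)^{1/2}\le\vartheta$ by the defining property of $r_1$, so $\Vert\wh\bfY_1-\bfY_1\Vert\le\vartheta$. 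Adding the three contributions immediately gives, for Variant~2, the bound $\lesssim h^2\varepsilon_r+h\varepsilon_{2r}+h^3+\vartheta$; for Variant~1 one further absorbs $h^2\varepsilon_r$ into $h\varepsilon_r$ via $h\le h_{\max}$ and $h\varepsilon_{2r}$ into $h\varepsilon_r$ via $\varepsilon_{2r}\le\varepsilon_r$, obtaining $\lesssim h\varepsilon_r+h^3+\vartheta$. The resulting constants inherit the dependence structure of the three lemmas, i.e.\ they depend only on $L$, $B$, a bound on the third derivatives of the exact solution, and $h_{\max}$, and are in particular independent of $\rho^{-1}$ and of the curvature of $\mathcal{M}$.

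I do not expect a substantial obstacle here: all the curvature-free work is already contained in the three lemmas, and the remainder is bookkeeping. The single point deserving an explicit sentence is that the Galerkin-defect lemma is stated for both variants although it is phrased through the Section~\ref{sec:v2} auxiliary object $\wh\bfY_{\star}$ of \eqref{eq:Ystar}: one should note that in both variants the columns of $\widetilde\bfU_2$ are orthogonal to those of $\wh\bfU_0$ and the columns of $\widetilde\bfV_2$ are orthogonal to those of $\wh\bfV_0$, while $\wh\bfY_{\star}$ and $\bfM(t_{\sfrac12},\wh\bfY_{\star})$ both have column space inside $\mathrm{range}(\wh\bfU_0)$ and row space inside $\mathrm{range}(\wh\bfV_0)$, so that $\widetilde\bfU_2^\top\wh\bfY_{\star}\widetilde\bfV_2=0$ and $\widetilde\bfU_2^\top\bfM(t_{\sfrac12},\wh\bfY_{\star})\widetilde\bfV_2=0$ irrespective of which variant produced $\widetilde\bfU_2$ and $\widetilde\bfV_2$; and that the estimate $\varepsilon_{2r}\le\varepsilon_r$ used for Variant~1 is consistent with the hierarchy of normal-component bounds posited in Assumption~\ref{as:assumptions}.
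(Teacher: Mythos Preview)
Your proposal is correct and follows essentially the same approach as the paper: invoke the triangle inequality \eqref{eq:boundGlobal}, cite the two projection lemmas and the Galerkin-defect lemma for the first two terms, and bound the truncation term by $\vartheta$ from the construction of the truncation step. Your additional bookkeeping (absorbing $h^2\varepsilon_r$ and $h\varepsilon_{2r}$ into $h\varepsilon_r$ for Variant~1, and the remark on why the Galerkin-defect lemma applies to both variants) is more explicit than the paper's two-line proof but entirely in the same spirit.
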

\begin{proof}
    This result directly follows from the construction of the truncation step, which yields $\Vert \wh\bfY_1 - \bfY_1 \Vert \leq \vartheta$. Then, all terms in \eqref{eq:boundGlobal} are bounded, proving the theorem.
\end{proof}
The global error bound of Theorem~\ref{th:robustglobal} is then directly derived with Lady Windermere's fan  \cite[II.3]{HairerNorsettWanner:ODE_BOOK1} similar to, e.g.,~\cite{KieriLubichWalach,KiV19,CeL22,CeKL22,CeKL23,ceruti2024robust}.



\section{Norm preservation up to high order terms}\label{sec:preservation}
The high-order parallel integrators preserve the norm up to high-order terms of the time step size, the truncation tolerance $\vartheta$, and the normal component $\varepsilon_r$, if the original full-rank equation does. This result is similar to the structure--preservation of the augmented BUG integrator \cite[Section~4]{CeKL22}, which preserves the norm up to the truncation tolerance, independent of the time step size. If the right-hand side $\bfF$ fulfills
\begin{align}\label{eq:normpres}
    \langle \bfZ, \bfF(t, \bfZ)\rangle = 0 \quad \text{ for all } \bfZ\in\mathbb{R}^{m\times n} \text{ and all }t,
\end{align}
then the solution of \eqref{eq:exact} preserves the Frobenius norm, that is, $\Vert \bfA(t)\Vert = \Vert \bfA(0) \Vert$ for all $t$. The presented second-order parallel integrators inherit this norm preservation property up to the truncation tolerance $\vartheta$ and an $h^4$ order term.
\begin{theorem}\label{th:normpres}
    If the right-hand side $\bfF$ fulfills \eqref{eq:normpres}, then, under the assumptions in Section~\ref{sec:robusterror}, $\bfY_1$ after a step of the second-order parallel integrator fulfills
    \begin{align*}
        \left\vert \Vert \bfY_1 \Vert - \Vert \bfY_0 \Vert \right\vert \leq \frac{\vartheta^2}{\Vert \bfY_1 \Vert + \Vert \bfY_0 \Vert} + C(h^4+ h^3\varepsilon_r + h^2\varepsilon_r^2)\,,
    \end{align*}
    where the constant $C$ is independent of singular values of the exact or approximate solution.
\end{theorem}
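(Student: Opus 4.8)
The plan is to bound the norm of the pre-truncation iterate $\wh\bfY_1 = \wh\bfU_1\wh\bfS_1\wh\bfV_1^\top$ first and then add the (controlled) truncation error. Since $\wh\bfU_1,\wh\bfV_1$ have orthonormal columns and the squared Frobenius norm of a block matrix splits over its blocks,
\begin{equation*}
\Vert\wh\bfY_1\Vert^2 = \Vert\wh\bfS_1\Vert^2 = \Vert\bar\bfS(t_1)\Vert^2 + \Vert\widetilde\bfU_2^\top\bfK(t_1)\Vert^2 + \Vert\bfL(t_1)^\top\widetilde\bfV_2\Vert^2 .
\end{equation*}
The diagonal block is handled exactly: along the $S$-step flow $\dot{\bar\bfS}(t) = \wh\bfU_0^\top\bfF(t,\wh\bfU_0\bar\bfS(t)\wh\bfV_0^\top)\wh\bfV_0$ one computes $\frac{d}{dt}\Vert\bar\bfS(t)\Vert^2 = 2\langle\wh\bfU_0\bar\bfS(t)\wh\bfV_0^\top,\bfF(t,\wh\bfU_0\bar\bfS(t)\wh\bfV_0^\top)\rangle = 0$ by~\eqref{eq:normpres}, so $\Vert\bar\bfS(t_1)\Vert = \Vert\bar\bfS(t_0)\Vert = \Vert\wh\bfU_0^\top\bfY_0\wh\bfV_0\Vert = \Vert\bfY_0\Vert$, where the last equality uses $\text{range}(\bfY_0)\subseteq\text{range}(\wh\bfU_0)$ and $\text{range}(\bfY_0^\top)\subseteq\text{range}(\wh\bfV_0)$. (If the $S$-equation is not solved exactly, a discretization conserving the quadratic invariant $\Vert\cdot\Vert^2$, such as the implicit midpoint rule, is needed at this point.)

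The core of the argument, which I expect to be the main obstacle, is to show that the two off-diagonal blocks are of size $O(h^2 + h\varepsilon_r)$, uniformly for the variants of Sections~\ref{sec:v1} and~\ref{sec:v2}. I would establish this for $\widetilde\bfU_2^\top\bfK(t_1)$ and obtain $\bfL(t_1)^\top\widetilde\bfV_2$ by the analogous co-range computation. Taylor expansion of the $K$-step yields $\bfK(t_1) = \bfK(t_0) + h\dot\bfK(t_0) + O(h^2)$ with a curvature-independent remainder constant. Here $\widetilde\bfU_2^\top\bfK(t_0) = 0$, since $\bfK(t_0) = \bfU_0\bfS_0\bfV_0^\top\wh\bfV_0$ has columns in $\text{range}(\bfU_0)\subseteq\text{range}(\wh\bfU_0)$, which is orthogonal to $\text{range}(\widetilde\bfU_2)$. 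For the first-order term, $\bfK(t_0)\wh\bfV_0^\top = \bfY_0$, so $\dot\bfK(t_0) = \bfF(t_0,\bfY_0)\wh\bfV_0$; splitting $\bfF(t_0,\bfY_0) = \bfM(t_0,\bfY_0) + \bfR(t_0,\bfY_0)$ with $\bfM(t_0,\bfY_0) = \bfU_0\bfU_0^\top\bfF(t_0,\bfY_0)(\bfI-\bfV_0\bfV_0^\top) + \bfF(t_0,\bfY_0)\bfV_0\bfV_0^\top$, the first summand is annihilated by $\widetilde\bfU_2^\top$ because $\widetilde\bfU_2^\top\bfU_0 = 0$, and the second because $\bfF(t_0,\bfY_0)\bfV_0$ lies in $\text{range}(\wh\bfU_0)$ by the construction $\wh\bfU_0 = \text{orth}([\bfU_0,\bfF(t_0,\bfY_0)\bfV_0])$. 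Hence $\widetilde\bfU_2^\top\dot\bfK(t_0) = \widetilde\bfU_2^\top\bfR(t_0,\bfY_0)\wh\bfV_0$ has norm at most $\Vert\bfR(t_0,\bfY_0)\Vert\le\varepsilon_r$, giving $\Vert\widetilde\bfU_2^\top\bfK(t_1)\Vert\le h\varepsilon_r + Ch^2$. The decisive observation is that augmenting with $\wh\bfU_0,\wh\bfV_0$ (rather than $\bfU_0,\bfV_0$) is exactly what makes the $O(1)$ part and the tangential $O(h)$ part of this block vanish, leaving only the $h$-scaled normal component and the genuine $O(h^2)$ Taylor remainder.

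Putting the three blocks together gives $\Vert\wh\bfY_1\Vert^2 = \Vert\bfY_0\Vert^2 + \gamma$ with $0\le\gamma\le C(h^4 + h^3\varepsilon_r + h^2\varepsilon_r^2)$. The SVD truncation then yields $\Vert\bfY_1\Vert^2 = \Vert\wh\bfY_1\Vert^2 - \sum_{j>r_1}\sigma_j^2$ with $\sum_{j>r_1}\sigma_j^2\le\vartheta^2$, so that $\Vert\bfY_1\Vert^2 - \Vert\bfY_0\Vert^2 = \gamma - \sum_{j>r_1}\sigma_j^2$ and
\begin{equation*}
\bigl|\,\Vert\bfY_1\Vert - \Vert\bfY_0\Vert\,\bigr| = \frac{\bigl|\Vert\bfY_1\Vert^2 - \Vert\bfY_0\Vert^2\bigr|}{\Vert\bfY_1\Vert + \Vert\bfY_0\Vert} \le \frac{\vartheta^2}{\Vert\bfY_1\Vert + \Vert\bfY_0\Vert} + \frac{\gamma}{\Vert\bfY_1\Vert + \Vert\bfY_0\Vert} \le \frac{\vartheta^2}{\Vert\bfY_1\Vert + \Vert\bfY_0\Vert} + C\bigl(h^4 + h^3\varepsilon_r + h^2\varepsilon_r^2\bigr),
\end{equation*}
where the last step uses $\Vert\bfY_1\Vert + \Vert\bfY_0\Vert \ge \Vert\bfY_0\Vert$, so that the constant only picks up the harmless factor $1/\Vert\bfY_0\Vert$ — a large-scale quantity, independent of the small singular values responsible for the curvature of $\calM$. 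Everything apart from the off-diagonal block estimate (the exact conservation of the $S$-flow, the block-Frobenius identity, and the singular-value bookkeeping) is routine.
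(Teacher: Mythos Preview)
Your argument is correct. The paper's proof takes a slightly different route: it introduces a time-dependent interpolant $\wh\bfY(t) = \bfY_S(t) + \bfY_K(t) + \bfY_L(t)$ with $\wh\bfY(t_0) = \bfY_0$ and $\wh\bfY(t_1) = \wh\bfY_1$, differentiates $\Vert\wh\bfY(t)\Vert^2$, shows the cross terms $\langle\bfY_S,\dot\bfY_L\rangle$ etc.\ vanish by orthogonality, kills $\langle\bfY_S,\dot\bfY_S\rangle$ via \eqref{eq:normpres}, and bounds $|\langle\bfY_K,\dot\bfY_K\rangle| \le \Vert\widetilde\bfU_2^\top\bfK(t)\Vert\cdot\Vert\widetilde\bfU_2^\top\bfF(t,\bfK(t)\wh\bfV_0^\top)\wh\bfV_0\Vert \lesssim (h^2+h\varepsilon_r)(h+\varepsilon_r)$ before integrating over $[t_0,t_1]$. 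You instead evaluate $\Vert\wh\bfS_1\Vert^2$ block by block at $t_1$ directly, which is more economical: you need only $\Vert\widetilde\bfU_2^\top\bfK(t_1)\Vert^2 \lesssim (h^2+h\varepsilon_r)^2$, not the additional factor $\Vert\widetilde\bfU_2^\top\dot\bfK\Vert$. Both routes hinge on the identical off-diagonal estimate $\Vert\widetilde\bfU_2^\top\bfK\Vert \lesssim h^2 + h\varepsilon_r$, proved the same way via $\widetilde\bfU_2 \perp \wh\bfU_0 \supseteq \text{range}([\bfU_0,\bfF(t_0,\bfY_0)\bfV_0])$. In fact the paper explicitly notes your variant in a remark immediately after the proof (``A similar result is achieved by directly investigating the squared norm $\langle\wh\bfY,\wh\bfY\rangle$\ldots''). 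Your approach is arguably cleaner; the paper's buys nothing extra here, though its time-derivative framework is what one would generalize to energy conservation for Hamiltonian problems.
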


\begin{proof}
    Define $\bfY_S(t) := \bfU_0\bar{\bfS}(t)\bfV_0^{\top}$, $\bfY_L(t) := \bfU_0\bfL(t)^{\top}\widetilde \bfV_2\widetilde \bfV_2^{\top}$, and $\bfY_K(t) :=\widetilde \bfU_2\widetilde \bfU_2^{\top} \bfK(t)\bfV_0^{\top}$. Then, $\wh\bfY(t) := \bfY_S(t) + \bfY_L(t) + \bfY_K(t)$ fulfills $\wh\bfY(t_0) = \bfY_0$ and $\wh\bfY(t_1) = \wh\bfY_1$. Moreover, with the Frobenius inner product~$\langle \cdot, \cdot \rangle$ we have
    \begin{align}\label{eq:consNorm0}
        \frac12 \frac{d}{dt} \Vert \wh\bfY \Vert^2 = \langle\wh\bfY(t), \dot{\wh\bfY}(t) \rangle = \langle \bfY_S(t) + \bfY_L(t) + \bfY_K(t), \dot\bfY_S(t) + \dot\bfY_L(t) + \dot\bfY_K(t) \rangle\,.
    \end{align}
    Note that
    \begin{align*}
        \langle \bfY_S(t), \dot\bfY_L(t)\rangle =\,& \langle\wh\bfU_0\bar{\bfS}(t)\wh\bfV_0^{\top}, \wh\bfU_0\wh\bfU_0^{\top}\bfF(t,\wh\bfU_0\bfL(t)^{\top})\widetilde\bfV_2\widetilde\bfV_2^{\top}\rangle\\
        =\,&\langle\bar{\bfS}(t), \wh\bfU_0^{\top}\bfF(t,\wh\bfU_0\bfL(t)^{\top})\widetilde\bfV_2\widetilde\bfV_2^{\top}\wh\bfV_0\rangle = 0\,.
    \end{align*}
    With the same arguments for remaining cross terms in \eqref{eq:consNorm0}, we directly get
    \begin{align}\label{eq:consNorm1}
        \frac12 \frac{d}{dt} \Vert \wh\bfY \Vert^2 = \langle \bfY_S(t), \dot\bfY_S(t)\rangle + \langle\bfY_L(t),\dot\bfY_L(t)\rangle + \langle\bfY_K(t), \dot\bfY_K(t) \rangle\,.
    \end{align}
    The first term in \eqref{eq:consNorm1} is zero:
    \begin{align*}
        \langle \bfY_S(t), \dot\bfY_S(t)\rangle =\,& \langle \wh\bfU_0\bar{\bfS}(t)\wh\bfV_0^{\top} , \wh\bfU_0\wh\bfU_0^{\top}\bfF(t,\wh\bfU_0\bar{\bfS}(t)\wh\bfV_0^{\top})\wh\bfV_0\wh\bfV_0^{\top}\rangle\\
        =\,& \langle \wh\bfU_0\bar{\bfS}(t)\wh\bfV_0^{\top} , \bfF(t,\wh\bfU_0\bar{\bfS}(t)\wh\bfV_0^{\top})\rangle \stackrel{\eqref{eq:normpres}}{=} 0\,.
    \end{align*}
    The remainder directly follows from the discussion of Section~\ref{sec:robusterror}: The absolute value of the third term in \eqref{eq:consNorm1} is bounded according to
    \begin{align}\label{eq:scProdCons}
        |\langle\bfY_K(t),\dot\bfY_K(t)\rangle| =\,& |\langle\widetilde\bfU_2\widetilde\bfU_2^{\top} \bfK(t)\wh\bfV_0^{\top}, \widetilde\bfU_2\widetilde\bfU_2^{\top}\bfF(t,\bfK(t)\wh\bfV_0^{\top})\wh\bfV_0\wh\bfV_0^{\top}\rangle| \nonumber\\
        \leq\,&\Vert\widetilde\bfU_2\widetilde\bfU_2^{\top}\bfK(t)\Vert\cdot\Vert \widetilde\bfU_2\widetilde\bfU_2^{\top}\bfF(t,\bfK(t)\wh\bfV_0^{\top})\wh\bfV_0\Vert.
    \end{align}
    From the construction of $\widetilde\bfU_2$ we know that $\widetilde\bfU_2\widetilde\bfU_2^{\top}[\bfK(t_0), \bfF(t_0, \bfY_0)\bfV_0] = 0$. Therefore, 
    \begin{align*}
        \Vert\widetilde\bfU_2\widetilde\bfU_2^{\top}\bfK(t)\Vert \leq\,& \Vert\widetilde\bfU_2\widetilde\bfU_2^{\top} \left[\bfK(t_0) + h\bfF(t_0, \bfY_0)\wh\bfV_0 \right]\Vert + c_1h^2\, \\
        =\,& h\Vert\widetilde\bfU_2\widetilde\bfU_2^{\top} \bfM(t_0, \bfY_0)\wh\bfV_0\Vert + c_1h^2 + h\varepsilon_r =  c_1h^2 + h\varepsilon_r\,, 
    \end{align*}
    where we use that $\widetilde\bfU_2\widetilde\bfU_2^{\top} \bfM(t_0, \bfY_0)\wh\bfV_0 = \widetilde\bfU_2\widetilde\bfU_2^{\top} \bfF(t_0, \bfY_0)\bfV_0 = 0$. For the second term in \eqref{eq:scProdCons}, we get
    \begin{align*}
        \Vert\widetilde\bfU_2\widetilde\bfU_2^{\top}\bfF(t,\bfK(t)\wh\bfV_0^{\top})\wh\bfV_0\Vert \leq\,& \Vert\widetilde\bfU_2\widetilde\bfU_2^{\top} \bfF(t_0, \bfY_0)\wh\bfV_0\Vert + c_2h\\
        \leq\,& \Vert\widetilde\bfU_2\widetilde\bfU_2^{\top} \bfM(t_0, \bfY_0)\wh\bfV_0\Vert + c_2h + \varepsilon_r\\
        =\,&c_2h + \varepsilon_r\,,
    \end{align*}
    with constants $c_1$ and $c_2$ independent of small singular values of the solution. Hence, $|\langle\bfY_K(t),\dot\bfY_K(t)\rangle| \lesssim h^3 + h^2\varepsilon_r + h\varepsilon_r^2$ and with an analog derivation, we have 
    \begin{align*}
        |\langle\bfY_L(t),\dot\bfY_L(t)\rangle| \lesssim h^3+ h^2\varepsilon_r + h\varepsilon_r^2\,.
    \end{align*}
    Therefore, we have for
    \begin{align*}
        \big\vert \Vert \wh\bfY_1 \Vert^2 - \Vert \bfY_0 \Vert^2 \big\vert = \left\vert \int_{t_0}^{t_1}\frac{d}{dt} \Vert \wh\bfY(t) \Vert^2\,dt \right\vert \leq c_3(h^4+ h^3\varepsilon_r + h^2\varepsilon_r^2)\,.
    \end{align*}
    Moreover, we know from the truncation step with $\bfSigma = \diag(\sigma_j)_{j=1}^{r_1}$ and $\wh{r} \in\{3r, 4r\}$ that
    \begin{align*}
        \Vert \wh\bfY_1 \Vert^2 - \Vert \bfY_1 \Vert^2 = \Vert \wh\bfSigma \Vert^2 - \Vert \bfSigma \Vert^2 = \sum_{j=r_1+1}^{\wh r} \sigma_j^2 \le \vartheta^2\,
    \end{align*}
    and therefore 
    \begin{align*}
        \big\vert \Vert \bfY_1 \Vert^2 - \Vert \bfY_0 \Vert^2 \big\vert =\,& \big\vert \Vert \bfY_1 \Vert^2 - \Vert \wh\bfY_1 \Vert^2 + \Vert \wh\bfY_1 \Vert^2 - \Vert \bfY_0 \Vert^2 \big\vert\\
        \leq\,& \vartheta^2 + c_3(h^4+ h^3\varepsilon_r + h^2\varepsilon_r^2)\,.
    \end{align*}
    With $\left\vert \Vert \bfY_1 \Vert - \Vert \bfY_0 \Vert \right\vert \cdot (\Vert \bfY_1 \Vert + \Vert \bfY_0 \Vert) = \big\vert \Vert \bfY_1 \Vert^2 - \Vert \bfY_0 \Vert^2 \big\vert$ we can then prove the theorem.
\end{proof}

\begin{remark}
    A similar derivation for the first-order parallel integrator yields a second-order local error. Denoting the solution obtained with the first-order parallel integrator as $\bfY_1^{(1)}$ we have
    \begin{align*}
        \left\vert \Vert \bfY_1^{(1)} \Vert - \Vert \bfY_0 \Vert \right\vert \leq \frac{\vartheta^2}{\Vert \bfY_1^{(1)} \Vert + \Vert \bfY_0 \Vert} + Ch^2\,.
    \end{align*}
    The proof uses $\widetilde \bfU_1$ and $\widetilde \bfV_1$ instead of $\widetilde \bfU_2$ $\widetilde \bfV_2$, while making use of
    \begin{align*}
        \Vert\widetilde\bfU_1\widetilde\bfU_1^{\top}\bfK(t)\Vert \leq c_1h\,, \text{ and }\;\Vert\widetilde\bfU_1\widetilde\bfU_1^{\top}\bfF(t_0,\bfK(t)\bfV_0^{\top})\bfV_0\Vert = O(1)\,.
    \end{align*}
\end{remark}

\begin{remark}
    A similar result is achieved by directly investigating the squared norm $\langle \wh\bfY, \wh\bfY \rangle$, which leads to bounding 
    \begin{align*}
        |\langle\bfY_K(t),\bfY_K(t)\rangle|
        \leq\,&\Vert\widetilde\bfU_2\widetilde\bfU_2^{\top}\bfK(t)\Vert^2.
    \end{align*}
\end{remark}

\begin{remark}
    The result of Theorem~\ref{th:normpres} is a clear advantage compared to the first-order parallel integrator. However, the augmented and midpoint BUG integrators enable preservation independent of the time step size. Note, however, that for most kinetic problems, structure preservation can be enforced by other means via further basis augmentations~\cite{Einkemmer2023,Einkemmer2023a}, or micro-macro decompositions \cite{koellermeier2023macro}.
\end{remark}


%

\section{Numerical results}\label{sec:numExp}
In this section, we present the results of different numerical experiments.
These numerical simulations are implemented using \textsc{Julia} 1.9.4. All numerical results can be reproduced with the openly available source code \cite{code}.

\subsection{Discrete Schr\"odinger equation}
To investigate the properties of the parallel integrators numerically, we study the discrete Schr\"odinger equation
\begin{equation}
	   i\dot{\bfY}(t) = \bfH[\bfY(t)], \quad \bfY(t_0) = \bfU_0 \bfS_0 \bfV_0^\top \in \R^{n \times n},
	\label{eq:discr_schoedinger}
\end{equation}
where with the $j$-th unit vector $\mathbf{e}_j$ we have
\begin{align*}
	&\bfH[\bfY] = -\frac{1}{2} \Big(\bfD \bfY + \bfY \bfD^\top \Big) + \bfV_\text{cos}\bfY\bfV_\text{cos} \in \R^{n \times n},
	\\
	& \bfD = \texttt{tridiag}(-1,2,-1) + \mathbf{e}_1\mathbf{e}_n^{\top} + \mathbf{e}_n\mathbf{e}_1^{\top} \in \R^{n \times n} , 
	\\
	& \bfV_\text{cos} = \text{diag}(1-\cos(2\pi j / n)) \quad j = -n/2, \cdots, n/2 - 1 . 
\end{align*}
As initial condition, we choose random orthonormal basis matrices $\bfU_0, \bfV_0 \in\mathbb{R}^{n\times n}$ as well as $\bfS_0 = \text{diag}(10^{-i})_{i = 1}^n$. The initial condition is then truncated to the rank chosen for the individual integrators.

The reference solution is computed with the \textsc{Julia} solver \texttt{ode45} and tolerance parameters \textsc{\{'RelTol', 1e-10, 'AbsTol', 1e-10\} }. We use the same solver to solve all matrix ODEs arising in the DLRA integrators. Numerical results are compared with the first-order parallel integrator \cite{CeKL23} and the second-order midpoint BUG integrator~\cite{ceruti2024robust}. The number of spatial grid points per dimension is $n=100$; the chosen ranks are $r\in\{5,10,15\}$, and the final time is $T = 1$.
\begin{figure}[ht]
    \centering
    \includegraphics[width=\textwidth]{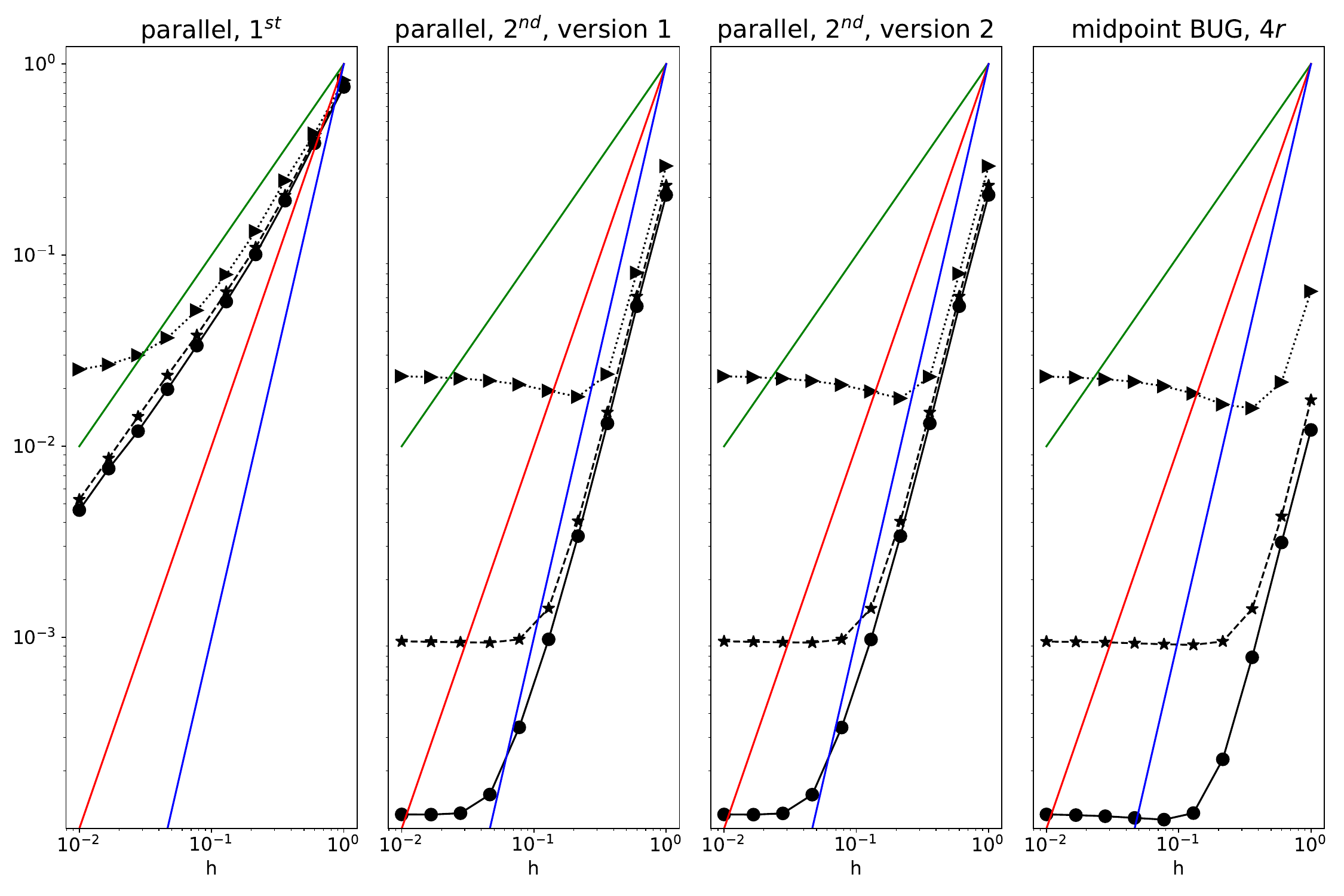}
    \caption{Comparison of relative approximation errors measured in Frobenius norm for different integrators for ranks $r\in\{5,10,15\}$. Solid lines indicate convergence slopes of orders one, two, and three.}
    \label{fig:nonStiff_T1_err}
\end{figure}
Figure~\ref{fig:nonStiff_T1_err} shows the error of the different integrators for various timestep sizes $h$. It is observed that while the parallel integrator yields large errors and shows a first-order error bound, the second--order parallel integrators lead to significantly improved errors while converging with order two. Both variants of the parallel integrator show similar error behavior. It has to be noted that the integrator of \cite{ceruti2024robust} shows a significantly improved error, while however, requiring three sequential solves of matrix ODEs with a maximal rank of $4r$. Recall that the parallel integrators of this work solve all differential equations in one parallel step while requiring a maximal rank of $2r$. The error of the norm is depicted in Figure~\ref{fig:nonStiff_T1_norm}. We observe that the second-order parallel integrators' norm preservation is significantly improved compared to the first-order parallel integrator and shows an additional order compared to the results in Section~\ref{sec:preservation}. However, the midpoint BUG integrator yields a significantly improved norm preservation property similar to the overall error. 
\begin{figure}[ht]
    \centering
    \includegraphics[width=\textwidth]{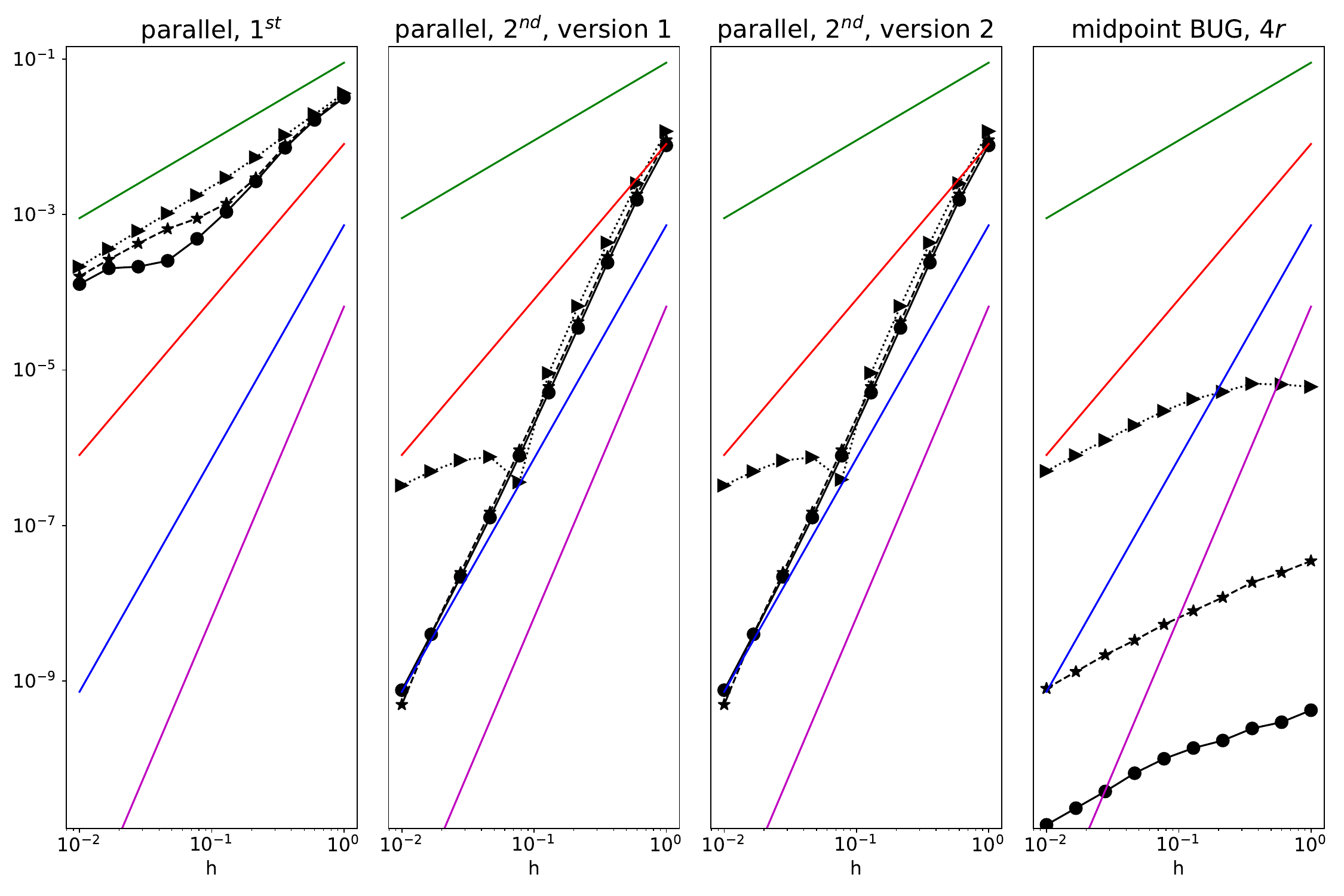}
    \caption{Comparison of Frobenius norm error for different integrators for ranks $r\in\{5,10,15\}$. Solid lines indicate convergence slopes of orders one, two, three, and four.}
    \label{fig:nonStiff_T1_norm}
\end{figure}

\subsection{Lattice testcase of radiative transfer}
To demonstrate the computational costs of the proposed method, we study the lattice testcase, which mimics the blocks in a nuclear reactor. The two-dimensional radiative transfer equation describes the underlying dynamics, which model the interaction of radiation particles with a background material. 
\begin{equation}
	\label{eq:rtela}
	\begin{aligned}
		&\partial_t f + \mathbf{\Omega}\cdot\nabla f + \sigma_t(\mathbf{x})f = \frac{\sigma_s(\mathbf{x})}{4\pi} \int_{\mathbb{S}^2} f \,d\mathbf{\Omega}+Q(\mathbf{x}),\qquad 
		(\mathbf{x}, \mathbf{\Omega}) \in [0,7]^2 \times \mathbb{S}^2 \;. \\[1mm]
		& f(t = 0) = 10^{-9}.
	\end{aligned}
\end{equation}
The solution is the radiation density or angular flux $f = f(t,\bfx,\bfOmega)$, where $t\in\mathbb{R}_+$ is time, $\bfx\in [-1,5, 1.5 ]\times [-1,5, 1.5 ]$ is the spatial position and $\bfOmega\in\mathbb{S}^2$ is the direction of flight, that is, the direction in which particles travel. The interaction with the background material is modeled through interaction cross-sections $\sigma_a(\mathbf{x})$ and $\sigma_s(\mathbf{x})$ such that $\sigma_t(\mathbf{x}) = \sigma_a(\mathbf{x}) + \sigma_s(\mathbf{x})$. Here, $\sigma_a$ models absorption, and $\sigma_s$ models scattering, i.e., collisions of particles with the background material. Moreover, the material includes an isotropic source $Q$. The positioning of these cross-sections is illustrated in Figure~\ref{fig:setup_lattice_testcase}, where white blocks represent a purely scattering material with $\sigma_s = 1$, blue blocks are purely absorbing materials with $\sigma_a = 10$. The red block consists of a purely absorbing material ($\sigma_a = 10$) with a source of $Q=1$. A reference solution computed with the spherical harmonics (P$_N$) method can be found in Figure~\ref{fig:scalar_flux_PN_Lattice_nx250_N21}. The reference solution is calculated with a polynomial degree of $21$ ($441$ expansion coefficients) in the direction of flight $\mathbf{\Omega}$ and $350^2 = 122500$ spatial cells. Here, the scalar flux $\Phi(T,\mathbf x) = \int_{\mathbb{S}^2} f(T,\mathbf x,\mathbf{\Omega}) \,\mathrm{d}\mathbf{\Omega}$ is plotted using a logarithmic scale as commonly done for this benchmark.
\begin{figure}
     \centering
     \begin{subfigure}[b]{0.49\textwidth}
         \centering
         \includegraphics[height=0.9\textwidth]{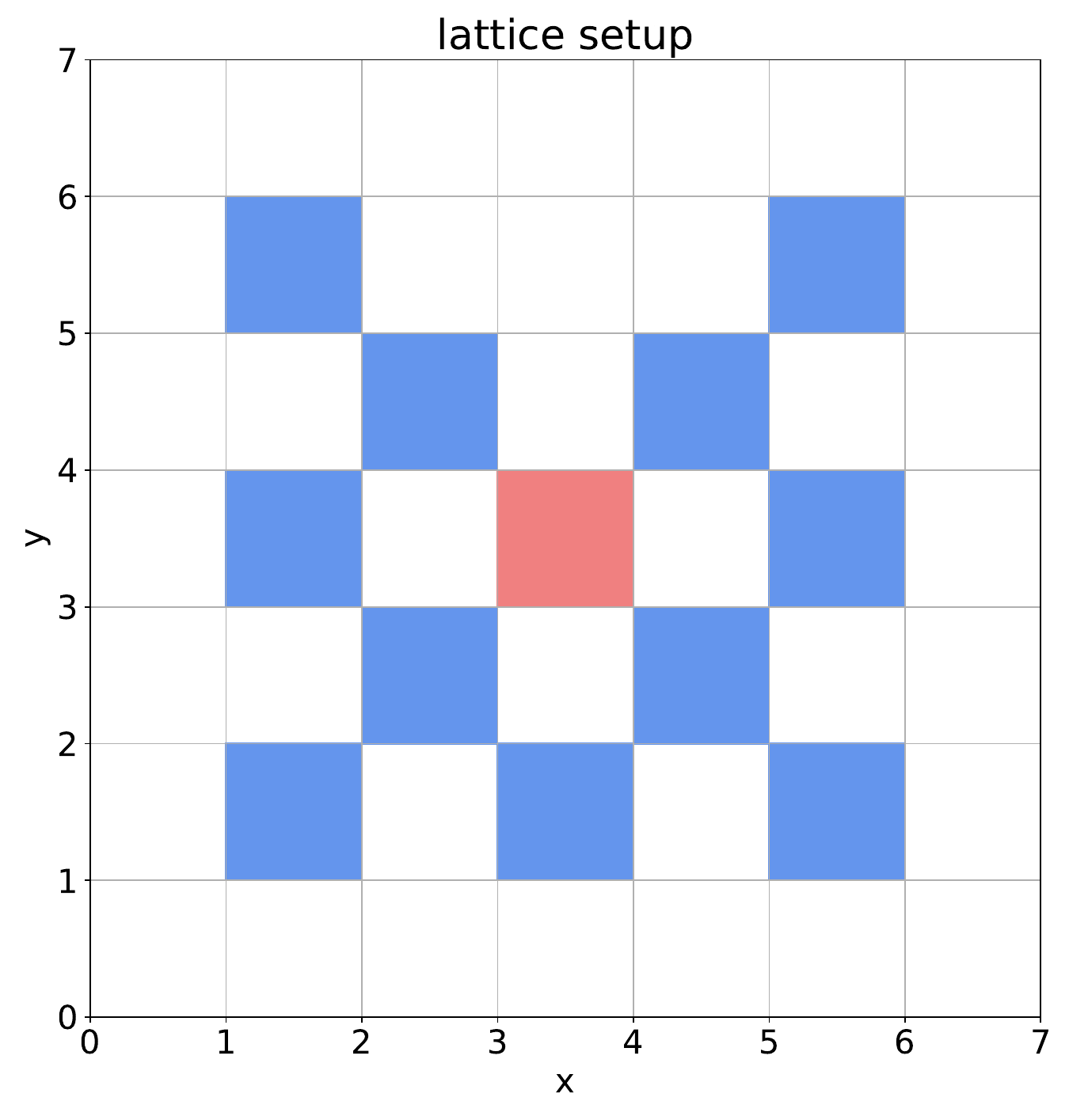}
         \caption{}\label{fig:setup_lattice_testcase}
     \end{subfigure}
     \hfill
     \begin{subfigure}[b]{0.49\textwidth}
         \centering
         \includegraphics[height=0.9\textwidth]{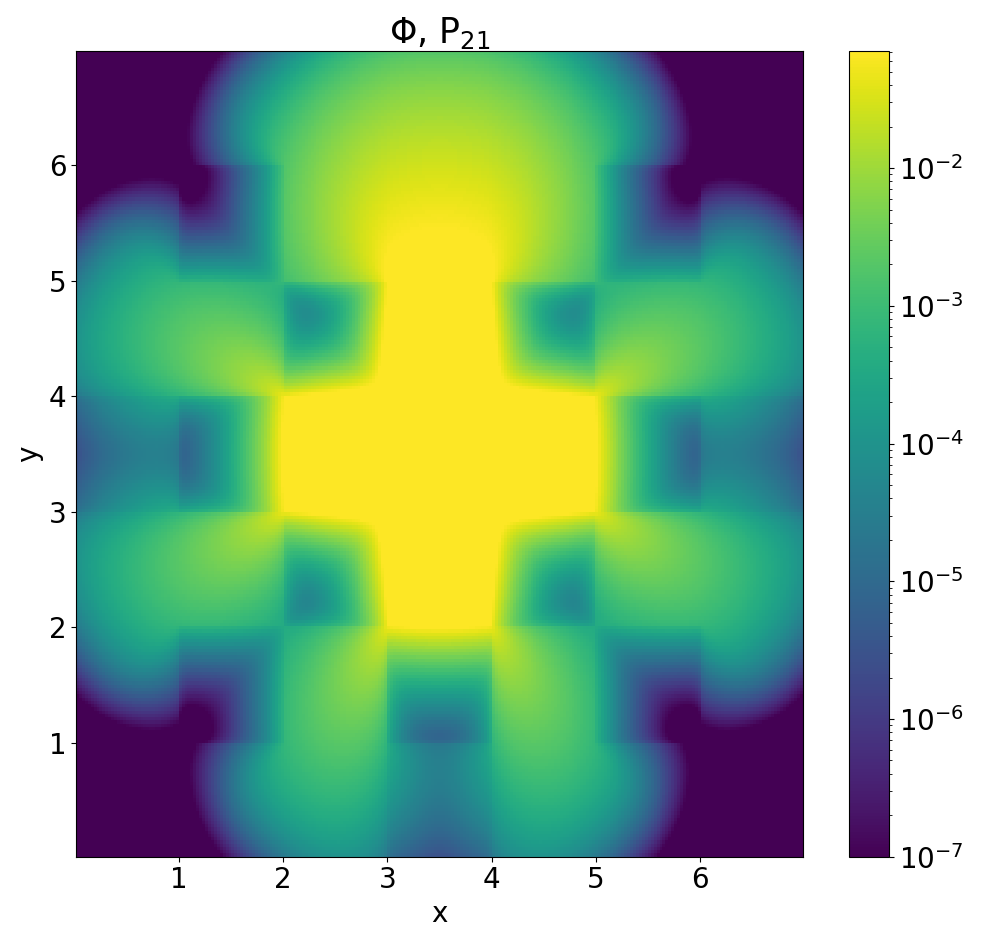}
         \caption{}
         \label{fig:scalar_flux_PN_Lattice_nx350_N21}
     \end{subfigure}
            \caption{Left: Spatial setup of the lattice. Right: Reference solution at time $T=3.2$ in logarithmic scale computed with the full-rank spherical harmonics (P$_N$) method using spherical harmonics up to degree $N = 21$.}
	\label{fig:scalar_flux_PN_Lattice_nx250_N21}
\end{figure}

To reduce computational costs and memory footprint, we wish to approximate the solution to the lattice testcase with dynamical low-rank approximation. We compare the second version of the new second-order parallel integrator with the rank $4r$ midpoint BUG integrator since these two integrators share similar dependence on normal components. The rank is fixed at $r = 10$ and $r = 20$. Hence, in the truncation step, we do not truncate according to a tolerance parameter but instead to a fixed rank. All substeps are solved with Heun's method, and we refer to \cite[Section~5]{CeKL23} for further information on the numerical discretization. The corresponding results for rank $r = 10$ can be found in Figure~\ref{fig:latticeRank10}, and the results for rank $r=20$ are depicted in Figure~\ref{fig:latticeRank20}. As is common for the lattice benchmark, the scalar flux $\Phi(T, \bfx)$ is depicted in a logarithmic scale, where negative function values are shown in white. It is observed that both integrators yield similar solutions. Note that the generation of negative function values is common for modal methods. The computational time of the second--order parallel integrator is substantially improved, with a runtime of 174 seconds for rank $20$ and 78 seconds for rank $10$ compared to the runtime of the midpoint BUG integrator, which is 265 seconds for rank $20$ and  132 for rank $10$. Note that neither code uses parallelism, and the reduced computational costs result solely from the matrix ODEs' reduced rank.

\begin{figure}[ht]
    \centering
    \includegraphics[height=0.49\textwidth]{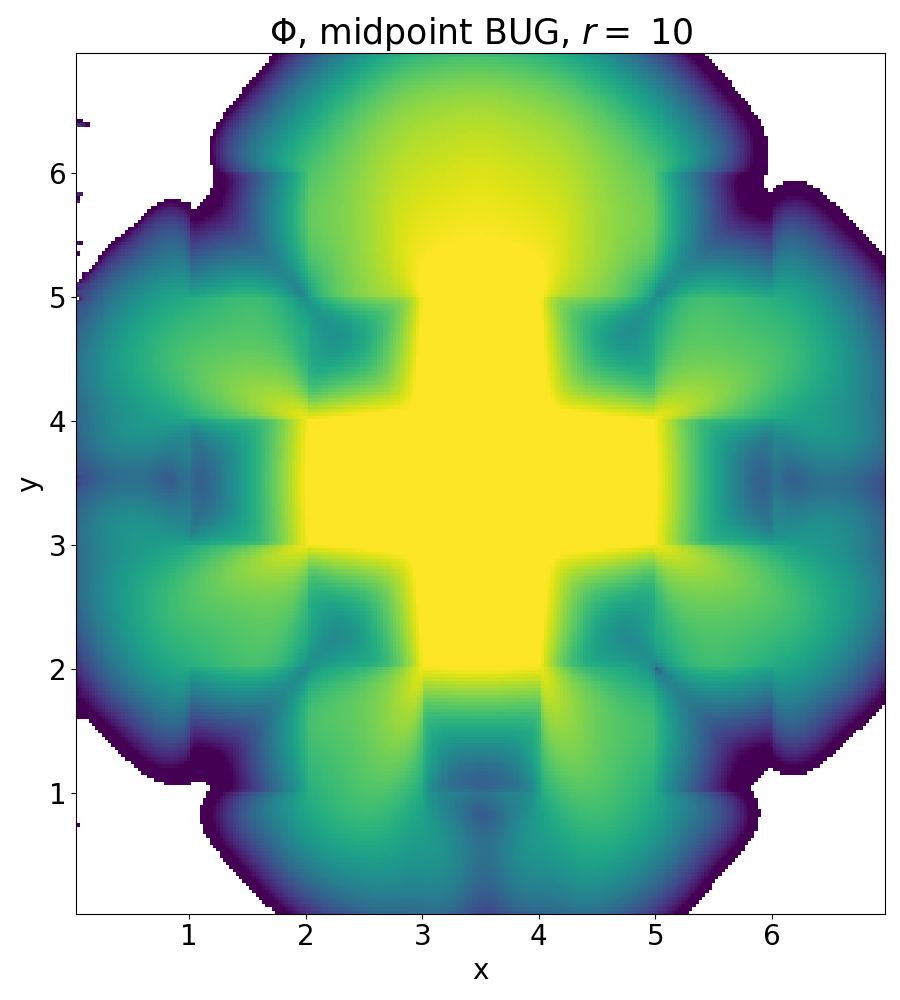}
    \includegraphics[height=0.49\textwidth]{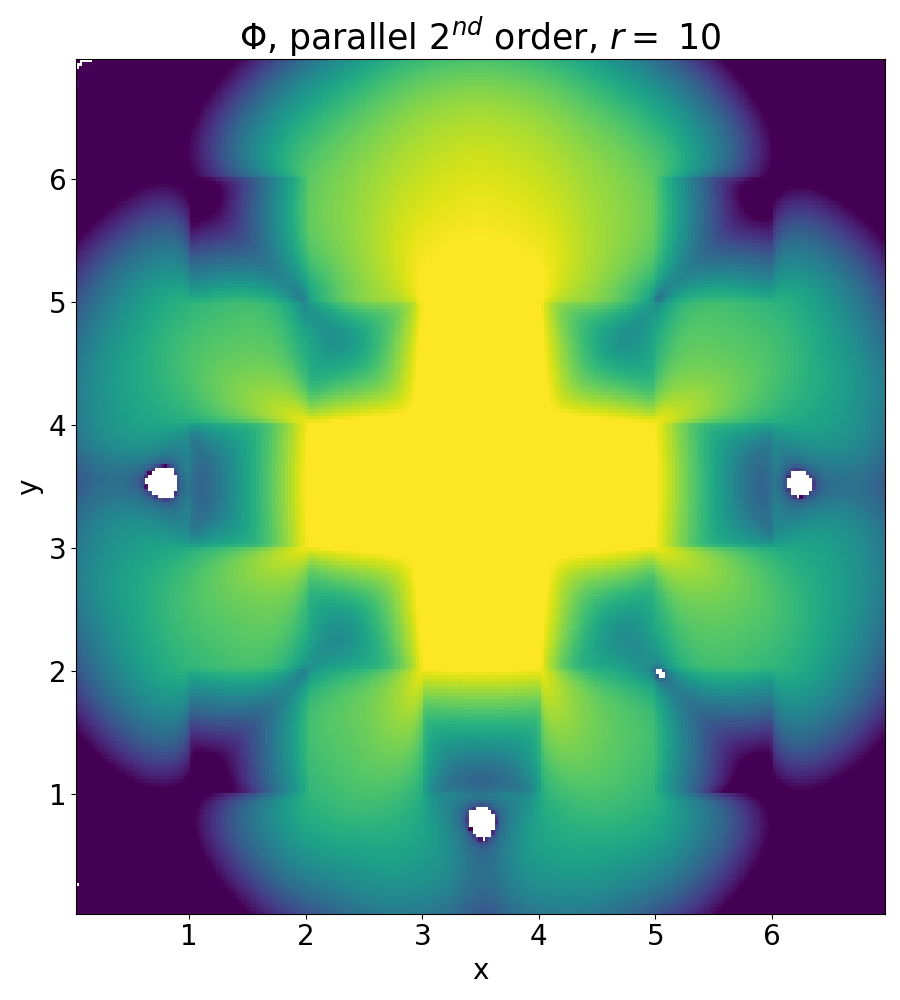}
    \caption{Left: Scalar flux of the midpoint BUG integrator. Left: Scalar flux of version~2 of the second-order parallel BUG integrator. Both solutions use a fixed rank $r=10$ and are plotted at time $T=3.2$.}
    \label{fig:latticeRank10}
\end{figure}

\begin{figure}[ht]
    \centering
    \includegraphics[height=0.49\textwidth]{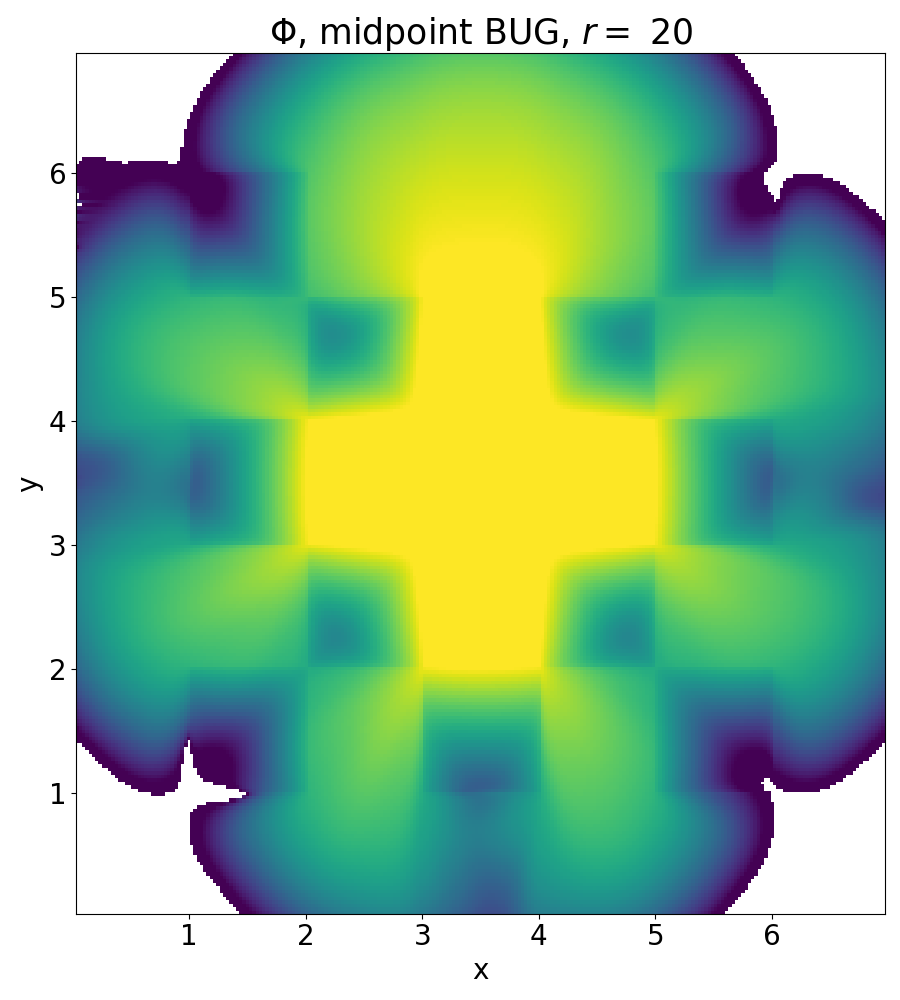}
    \includegraphics[height=0.49\textwidth]{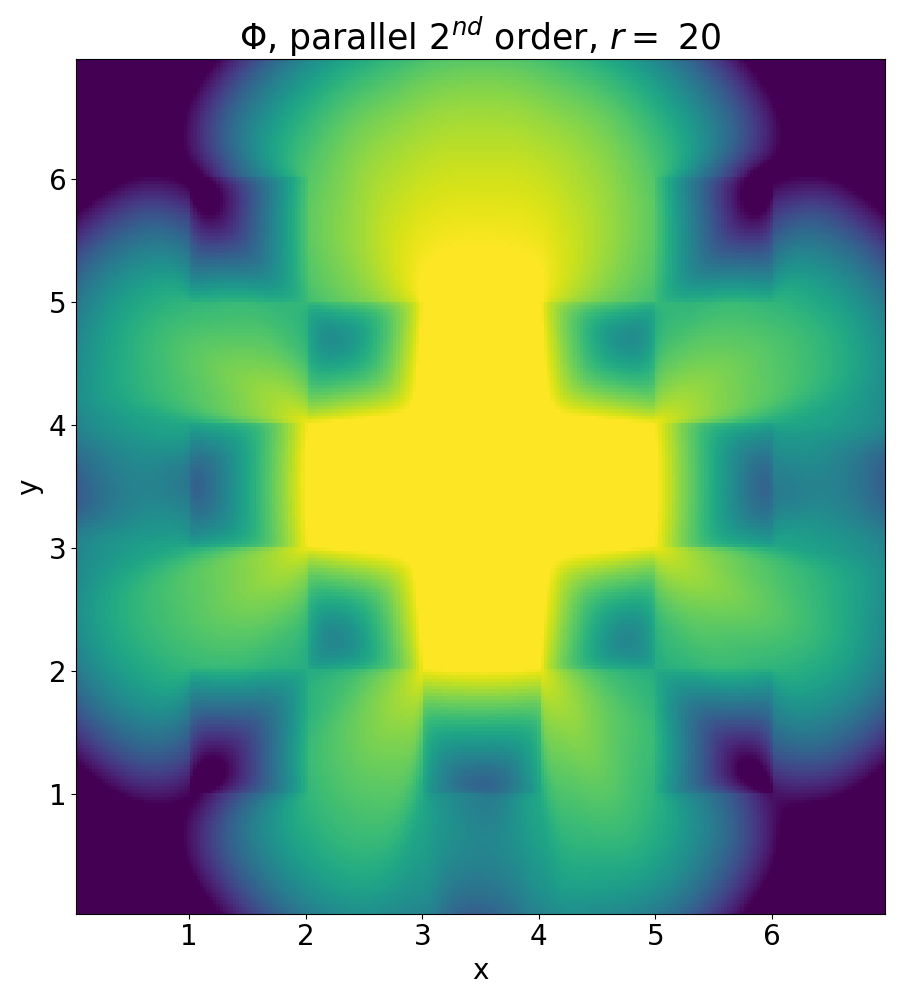}
    \caption{Left: Scalar flux of the midpoint BUG integrator. Left: Scalar flux of version~2 of the second-order parallel BUG integrator. Both solutions use a fixed rank $r=20$ and are plotted at time $T=3.2$.}
    \label{fig:latticeRank20}
\end{figure}

\section{Conclusion and outlook}\label{sec:conclusionoutlook}
In this work, two novel second--order parallel BUG integrators have been proposed. Moreover, a rigorous analysis to a second--order robust error bound has been presented, which for the integrator of Section~\ref{sec:v2} has led to an improved dependence on normal components, similar to the $4r$ midpoint BUG integrator of \cite{ceruti2024robust}. In addition, an enhanced error in conserving the norm has been presented. Numerical experiments show the derived error behavior, which is substantially reduced compared to the first-order parallel BUG integrator. While the resulting error is more significant than that obtained with the midpoint BUG integrator of \cite{ceruti2024robust}, the parallel integrator yields significantly reduced computational times, even when not using its inherent parallelism.

While parallelism is limited in the matrix case, the integrator is expected to extend to tree-tensor networks, similar to \cite{ceruti2023rank}, where the parallelism of the integrator can significantly reduce computational times. Moreover, the integrator is expected to allow straightforward extensions to preserve solution invariants and allow for asymptotic--preserving schemes for kinetic problems, similar to \cite{einkemmer2023conservation,patwardhan2024asymptotic}.

\section*{Acknowledgments}
I want to thank Christian Lubich and Dominik Sulz for many helpful discussions during my stay in T\"ubingen.

\bibliographystyle{siamplain}
\bibliography{main}

\begin{thebibliography}{10}

\bibitem{ali2024dynamicallyAppl}
{\sc W.~H. Ali and P.~F. Lermusiaux}, {\em Dynamically orthogonal narrow-angle
  parabolic equations for stochastic underwater sound propagation. {P}art {II}:
  Applications}, J. Acoust. Soc. Amer., 155 (2024), pp.~656--672.

\bibitem{babaee2017robust}
{\sc H.~Babaee, M.~Choi, T.~P. Sapsis, and G.~E. Karniadakis}, {\em A robust
  bi-orthogonal/dynamically-orthogonal method using the covariance
  pseudo-inverse with application to stochastic flow problems}, J. Comput.
  Phys., 344 (2017), pp.~303--319.

\bibitem{baumann2023energy}
{\sc L.~Baumann, L.~Einkemmer, C.~Klingenberg, and J.~Kusch}, {\em Energy
  stable and conservative dynamical low-rank approximation for the {S}u-{O}lson
  problem}, arXiv preprint arXiv:2307.07538,  (2023).

\bibitem{carrel2023projected}
{\sc B.~Carrel and B.~Vandereycken}, {\em Projected exponential methods for
  stiff dynamical low-rank approximation problems}, arXiv preprint
  arXiv:2312.00172,  (2023).

\bibitem{cassini2022efficient}
{\sc F.~Cassini and L.~Einkemmer}, {\em Efficient 6d {V}lasov simulation using
  the dynamical low-rank framework {E}nsign}, Comput. Phys. Commun., 280
  (2022), p.~108489.

\bibitem{ceruti2024robust}
{\sc G.~Ceruti, L.~Einkemmer, J.~Kusch, and C.~Lubich}, {\em A robust
  second-order low-rank bug integrator based on the midpoint rule}, arXiv
  preprint arXiv:2402.08607,  (2024).

\bibitem{CeKL22}
{\sc G.~Ceruti, J.~Kusch, and C.~Lubich}, {\em A rank-adaptive robust
  integrator for dynamical low-rank approximation}, BIT Numer. Math., 62
  (2022), pp.~1149--1174.

\bibitem{CeKL23}
{\sc G.~Ceruti, J.~Kusch, and C.~Lubich}, {\em A parallel rank-adaptive
  integrator for dynamical low-rank approximation}, arXiv preprint
  arXiv:2304.05660,  (2023).

\bibitem{CeL22}
{\sc G.~Ceruti and C.~Lubich}, {\em An unconventional robust integrator for
  dynamical low-rank approximation}, BIT Numer. Math., 62 (2022), pp.~23--44.

\bibitem{ceruti2023rank}
{\sc G.~Ceruti, C.~Lubich, and D.~Sulz}, {\em Rank-adaptive time integration of
  tree tensor networks}, SIAM Journal on Numerical Analysis, 61 (2023),
  pp.~194--222.

\bibitem{charous2023dynamically}
{\sc A.~Charous and P.~F. Lermusiaux}, {\em Dynamically orthogonal
  {R}unge--{K}utta schemes with perturbative retractions for the dynamical
  low-rank approximation}, SIAM J. Sci. Comput., 45 (2023), pp.~A872--A897.

\bibitem{Coughlin2022}
{\sc J.~Coughlin and J.~Hu}, {\em {Efficient dynamical low-rank approximation
  for the Vlasov-Amp{\`{e}}re-Fokker-Planck system}}, J. Comput. Phys., 470
  (2022), p.~111590, \url{https://doi.org/10.1016/j.jcp.2022.111590}.

\bibitem{Coughlin2023}
{\sc J.~Coughlin, J.~Hu, and U.~Shumlak}, {\em {Robust and conservative
  dynamical low-rank methods for the Vlasov equation via a novel macro-micro
  decomposition}}, arXiv:2311.09425,  (2023),
  \url{https://doi.org/10.48550/ARXIV.2311.09425}.

\bibitem{ding2021dynamical}
{\sc Z.~Ding, L.~Einkemmer, and Q.~Li}, {\em Dynamical low-rank integrator for
  the linear {B}oltzmann equation: error analysis in the diffusion limit}, SIAM
  J. Numer. Anal., 59 (2021), pp.~2254--2285.

\bibitem{DoPNH23}
{\sc M.~Donello, G.~Palkar, M.~H. Naderi, D.~C. Del Rey~Fern\'{a}ndez, and
  H.~Babaee}, {\em Oblique projection for scalable rank-adaptive reduced-order
  modelling of nonlinear stochastic partial differential equations with
  time-dependent bases}, Proc. A., 479 (2023), pp.~Paper No. 20230320, 28,
  \url{https://doi.org/10.1098/rspa.2023.0320},
  \url{https://doi.org/10.1098/rspa.2023.0320}.

\bibitem{Einkemmer2023}
{\sc L.~Einkemmer}, {\em {Accelerating the simulation of kinetic shear
  Alfv{\'e}n waves with a dynamical low-rank approximation}}, J. Comput. Phys.,
  501 (2024), p.~112757.

\bibitem{einkemmer2024asymptotic}
{\sc L.~Einkemmer, J.~Hu, and J.~Kusch}, {\em Asymptotic-preserving and energy
  stable dynamical low-rank approximation}, SIAM J. Numer. Anal., 62 (2024),
  pp.~73--92.

\bibitem{EiJ21}
{\sc L.~Einkemmer and I.~Joseph}, {\em {A mass, momentum, and energy
  conservative dynamical low-rank scheme for the Vlasov equation}}, J. Comput.
  Phys., 443 (2021), p.~110495.

\bibitem{einkemmer2023conservation}
{\sc L.~Einkemmer, J.~Kusch, and S.~Schotth{\"o}fer}, {\em Conservation
  properties of the augmented basis update \& galerkin integrator for kinetic
  problems}, arXiv preprint arXiv:2311.06399,  (2023).

\bibitem{Einkemmer2023b}
{\sc L.~Einkemmer, J.~Kusch, and S.~Schotthöfer}, {\em {Conservation
  properties of the augmented basis update \& Galerkin integrator for kinetic
  problems}},  (2023), \url{https://doi.org/10.48550/ARXIV.2311.06399}.

\bibitem{EiL18}
{\sc L.~Einkemmer and C.~Lubich}, {\em A low-rank projector-splitting
  integrator for the {V}lasov--{P}oisson equation}, SIAM J. Sci. Comput., 40
  (2018), pp.~B1330--B1360.

\bibitem{Einkemmer2023a}
{\sc L.~Einkemmer, J.~Mangott, and M.~Prugger}, {\em A low-rank complexity
  reduction algorithm for the high-dimensional kinetic chemical master
  equation}, J. Comput. Phys.,  (2024), p.~112827,
  \url{https://doi.org/10.1016/j.jcp.2024.112827}.

\bibitem{Einkemmer2020}
{\sc L.~Einkemmer, A.~Ostermann, and C.~Piazzola}, {\em {A low-rank
  projector-splitting integrator for the Vlasov--Maxwell equations with
  divergence correction}}, J. Comput. Phys., 403 (2020), p.~109063.

\bibitem{Einkemmer2022}
{\sc L.~Einkemmer, A.~Ostermann, and C.~Scalone}, {\em A robust and
  conservative dynamical low-rank algorithm}, J. Comput. Phys., 484 (2023),
  p.~112060.

\bibitem{FeL18}
{\sc F.~Feppon and P.~F. Lermusiaux}, {\em Dynamically orthogonal numerical
  schemes for efficient stochastic advection and {L}agrangian transport}, SIAM
  Rev., 60 (2018), pp.~595--625.

\bibitem{haegeman2016unifying}
{\sc J.~Haegeman, C.~Lubich, I.~Oseledets, B.~Vandereycken, and F.~Verstraete},
  {\em Unifying time evolution and optimization with matrix product states},
  Phys. Rev. B, 94 (2016), p.~165116.

\bibitem{HairerNorsettWanner:ODE_BOOK1}
{\sc E.~Hairer, S.~P. N{\o}rsett, and G.~Wanner}, {\em Solving ordinary
  differential equations. {I}. Nonstiff problems}, vol.~8 of Springer Series in
  Computational Mathematics, Springer-Verlag, Berlin, second~ed., 1993.

\bibitem{Hochbruck2023}
{\sc M.~Hochbruck, M.~Neher, and S.~Schrammer}, {\em Rank-adaptive dynamical
  low-rank integrators for first-order and second-order matrix differential
  equations}, BIT Numer. Math., 63 (2023),
  \url{https://doi.org/10.1007/s10543-023-00942-6}.

\bibitem{Jahnke2008}
{\sc T.~Jahnke and W.~Huisinga}, {\em A dynamical low-rank approach to the
  chemical master equation}, Bull. Math. Biol., 70 (2008), pp.~2283--2302,
  \url{https://doi.org/10.1007/s11538-008-9346-x}.

\bibitem{kazashi2021existence}
{\sc Y.~Kazashi and F.~Nobile}, {\em Existence of dynamical low rank
  approximations for random semi-linear evolutionary equations on the maximal
  interval}, Stochastics and Partial Differential Equations: Analysis and
  Computations, 9 (2021), pp.~603--629.

\bibitem{KieriLubichWalach}
{\sc E.~Kieri, C.~Lubich, and H.~Walach}, {\em Discretized dynamical low-rank
  approximation in the presence of small singular values}, SIAM J. Numer.
  Anal., 54 (2016), pp.~1020--1038, \url{https://doi.org/10.1137/15M1026791},
  \url{https://doi.org/10.1137/15M1026791}.

\bibitem{KiV19}
{\sc E.~Kieri and B.~Vandereycken}, {\em Projection methods for dynamical
  low-rank approximation of high-dimensional problems}, Comput. Meth. Appl.
  Math., 19 (2019), pp.~73--92.

\bibitem{KochLubich07}
{\sc O.~Koch and C.~Lubich}, {\em Dynamical low-rank approximation}, SIAM J.
  Matrix Anal. Appl., 29 (2007), pp.~434--454,
  \url{https://doi.org/10.1137/050639703},
  \url{https://doi.org/10.1137/050639703}.

\bibitem{koellermeier2023macro}
{\sc J.~Koellermeier, P.~Krah, and J.~Kusch}, {\em Macro-micro decomposition
  for consistent and conservative model order reduction of hyperbolic shallow
  water moment equations: A study using {POD-Galerkin} and dynamical low rank
  approximation}, arXiv preprint arXiv:2302.01391,  (2023).

\bibitem{code}
{\sc J.~Kusch}, {\em Numerical testcases for "{S}econd-order robust parallel
  integrators for dynamical low-rank approximation"}, 2021.
\newblock
  \url{https://github.com/JonasKu/Publication-Second-Order-Robust-Parallel-Integrators-for-DLRA.git}.

\bibitem{kusch2021DLRUQ}
{\sc J.~Kusch, G.~Ceruti, L.~Einkemmer, and M.~Frank}, {\em Dynamical low-rank
  approximation for {B}urgers' equation with uncertainty}, Int. J. Uncertainty
  Quantification,  (2021),
  \url{10.1615/Int.J.UncertaintyQuantification.2022039345}.

\bibitem{kusch2021stability}
{\sc J.~Kusch, L.~Einkemmer, and G.~Ceruti}, {\em On the stability of robust
  dynamical low-rank approximations for hyperbolic problems}, SIAM Journal on
  Scientific Computing, 45 (2023), pp.~A1--A24,
  \url{https://doi.org/10.1137/21M1446289}.

\bibitem{kusch2023robust}
{\sc J.~Kusch and P.~Stammer}, {\em A robust collision source method for rank
  adaptive dynamical low-rank approximation in radiation therapy}, ESAIM:
  Mathematical Modelling and Numerical Analysis, 57 (2023), pp.~865--891.

\bibitem{kusch2022low}
{\sc J.~Kusch, B.~Whewell, R.~McClarren, and M.~Frank}, {\em A low-rank power
  iteration scheme for neutron transport criticality problems}, J. Comput.
  Phys., 470 (2022), p.~111587.

\bibitem{LubichOseledets}
{\sc C.~Lubich and I.~V. Oseledets}, {\em A projector-splitting integrator for
  dynamical low-rank approximation}, BIT Numer. Math., 54 (2014), pp.~171--188,
  \url{https://doi.org/10.1007/s10543-013-0454-0},
  \url{https://doi.org/10.1007/s10543-013-0454-0}.

\bibitem{MuN18}
{\sc E.~Musharbash and F.~Nobile}, {\em Dual dynamically orthogonal
  approximation of incompressible {N}avier--{S}tokes equations with random
  boundary conditions}, J. Comput. Phys., 354 (2018), pp.~135--162.

\bibitem{MuNV20}
{\sc E.~Musharbash, F.~Nobile, and E.~Vidli{\v{c}}kov{\'a}}, {\em Symplectic
  dynamical low rank approximation of wave equations with random parameters},
  BIT Numer. Math., 60 (2020), pp.~1153--1201.

\bibitem{patil2020real}
{\sc P.~Patil and H.~Babaee}, {\em Real-time reduced-order modeling of
  stochastic partial differential equations via time-dependent subspaces}, J.
  Comput. Phys., 415 (2020), p.~109511.

\bibitem{patwardhan2024asymptotic}
{\sc C.~Patwardhan, M.~Frank, and J.~Kusch}, {\em Asymptotic-preserving and
  energy stable dynamical low-rank approximation for thermal radiative transfer
  equations}, arXiv preprint arXiv:2402.16746,  (2024).

\bibitem{peng2021high}
{\sc Z.~Peng and R.~G. McClarren}, {\em A high-order/low-order (holo) algorithm
  for preserving conservation in time-dependent low-rank transport
  calculations}, J. Comput. Phys., 447 (2021), p.~110672.

\bibitem{peng2023sweep}
{\sc Z.~Peng and R.~G. McClarren}, {\em A sweep-based low-rank method for the
  discrete ordinate transport equation}, J. Comput. Phys., 473 (2023),
  p.~111748.

\bibitem{PeMF20}
{\sc Z.~Peng, R.~G. McClarren, and M.~Frank}, {\em A low-rank method for
  two-dimensional time-dependent radiation transport calculations}, J. Comput.
  Phys., 421 (2020), p.~109735.

\bibitem{Prugger2023}
{\sc M.~Prugger, L.~Einkemmer, and C.~Lopez}, {\em A dynamical low-rank
  approach to solve the chemical master equation for biological reaction
  networks}, J. Comput. Phys.,  (2023), p.~112250,
  \url{https://doi.org/10.1016/j.jcp.2023.112250}.

\bibitem{SaL09}
{\sc T.~P. Sapsis and P.~F. Lermusiaux}, {\em Dynamically orthogonal field
  equations for continuous stochastic dynamical systems}, Physica D, 238
  (2009), pp.~2347--2360.

\bibitem{savostianova2023robust}
{\sc D.~Savostianova, E.~Zangrando, G.~Ceruti, and F.~Tudisco}, {\em Robust
  low-rank training via approximate orthonormal constraints}, arXiv preprint
  arXiv:2306.01485,  (2023).

\bibitem{schmidt2023rank}
{\sc J.~Schmidt, P.~Hennig, J.~Nick, and F.~Tronarp}, {\em The rank-reduced
  {K}alman filter: Approximate dynamical-low-rank filtering in high
  dimensions}, arXiv preprint arXiv:2306.07774,  (2023).

\bibitem{schotthofer2022low}
{\sc S.~Schotth{\"o}fer, E.~Zangrando, J.~Kusch, G.~Ceruti, and F.~Tudisco},
  {\em Low-rank lottery tickets: finding efficient low-rank neural networks via
  matrix differential equations}, Advances in Neural Information Processing
  Systems, 35 (2022), pp.~20051--20063.

\bibitem{SeCK23}
{\sc A.~Seguin, G.~Ceruti, and D.~Kressner}, {\em From low-rank retractions to
  dynamical low-rank approximation and back}, arXiv:2309.06125,  (2023).

\bibitem{sulz2024numerical}
{\sc D.~Sulz, C.~Lubich, G.~Ceruti, I.~Lesanovsky, and F.~Carollo}, {\em
  Numerical simulation of long-range open quantum many-body dynamics with tree
  tensor networks}, Physical Review A, 109 (2024), p.~022420.

\bibitem{Uschmajew2023}
{\sc A.~Uschmajew and A.~Zeiser}, {\em {Dynamical low-rank approximation of the
  Vlasov-Poisson equation with piecewise linear spatial boundary}},
  arXiv:2303.01856,  (2023), \url{https://doi.org/10.48550/ARXIV.2303.01856}.

\bibitem{yin2023semi}
{\sc P.~Yin, E.~Endeve, C.~D. Hauck, and S.~R. Schnake}, {\em A semi-implicit
  dynamical low-rank discontinuous {G}alerkin method for space homogeneous
  kinetic equations. {P}art {I}: emission and absorption}, arXiv preprint
  arXiv:2308.05914,  (2023).

\bibitem{zhao2023low}
{\sc X.-M.~G. Yong-Liang~Zhao}, {\em {A low-rank algorithm for strongly damped
  wave equations with visco-elastic damping and mass terms}}, arXiv:2308.08888,
   (2023).

\bibitem{zangrando2023rank}
{\sc E.~Zangrando, S.~Schotth{\"o}fer, G.~Ceruti, J.~Kusch, and F.~Tudisco},
  {\em Rank-adaptive spectral pruning of convolutional layers during training},
  arXiv preprint arXiv:2305.19059,  (2023).

\end{thebibliography}
\end{document}


\maketitle

\section{A detailed example}

Here we include some equations and theorem-like environments to show
how these are labeled in a supplement and can be referenced from the
main text.
Consider the following equation:
\begin{equation}
  \label{eq:suppa}
  a^2 + b^2 = c^2.
\end{equation}
You can also reference equations such as \cref{eq:matrices,eq:bb} 
from the main article in this supplement.

\lipsum[100-101]

\begin{theorem}
An example theorem.
\end{theorem}

\lipsum[102]
 
\begin{lemma}
An example lemma.
\end{lemma}

\lipsum[103-105]

Here is an example citation: \cite{KoMa14}.

\section[Proof of Thm]{Proof of \cref{thm:bigthm}}
\label{sec:proof}

\lipsum[106-112]

\section{Additional experimental results}
\Cref{tab:smfoo} shows additional
supporting evidence. 

\begin{table}[htbp]
\footnotesize
  \caption{Example table.}\label{tab:smfoo}
\begin{center}
  \begin{tabular}{|c|c|c|} \hline
   Species & \bf Mean & \bf Std.~Dev. \\ \hline
    1 & 3.4 & 1.2 \\
    2 & 5.4 & 0.6 \\ \hline
  \end{tabular}
\end{center}
\end{table}

\bibliographystyle{siamplain}
\bibliography{references}